\newtheorem{lemma}{Lemma}[section]
\newtheorem{theorem}[lemma]{Theorem}
\newtheorem{claim}[]{\noindent Claim}[section]
\newtheorem{conjecture}[lemma]{Conjecture}
\begin{document}

\title{Odd complete bipartite minors in graphs with\\
 independence number two}

        \author[1]{Rong Chen\footnote{Email: rongchen@fzu.edu.cn (R. Chen).}}
		\author[1]{Zijian Deng\footnote{Email: zj1329205716@163.com.(corresponding author).}}

\affil[1]{Center for Discrete Mathematics, Fuzhou University,
Fujian, 350003, China}

	\date{}
	\maketitle
	\begin{abstract}
Recently, Chen and Deng have proved that every graph $G$ with independence number two contains $K_{\ell,\chi(G)- \ell}$ as a minor for each integer $\ell$ with $1\leq\ell < \chi(G)$. In this paper, we extend this result to odd minor version. That is, we prove that  each graph $G$ with independence number two contains $K_{\ell,\chi(G)- \ell}$ as an odd minor for each integer $\ell$ with $1\leq\ell < \chi(G)$.


	\end{abstract}
	
	\textbf{Mathematics Subject Classification}: 05C15; 05C83
	
	\textbf{Keywords}: Hadwiger's conjecture; odd minors; independence number


\section{\bf Introduction}
All graphs considered in this paper are finite and simple. 
For a graph $G$, we use $\chi(G)$ and $\omega(G)$ to denote the chromatic number and clique number of $G$, respectively.
An \emph{independent set} 
is a subset of $V(G)$ that are pairwise nonadjacent. Let $\alpha(G)$ be the \emph{independence number} of $G$, that is the maximum size of independent sets. Let $K_{n}$ be a clique on $n$ vertices, and $P_{n}$ a path on $n$ vertices. 

Let \(H\) be a graph with vertex set \(V(H) = \{v_1, v_2, \dots, v_h\}\). An \emph{\(H\)-model} in a graph \(G\) is a collection of vertex-disjoint subgraphs \((T_v)_{v \in V(H)}\) in \(G\), where each \(T_v\) is a tree (called a \emph{branch set} of $H$), and for every edge \(uv \in E(H)\), there exists at least one edge in \(G\) connecting a vertex of \(T_u\) to a vertex of \(T_v\).
We call an $H$-model \emph{odd} if there exists a vertex coloring of \( H \) using two colors (e.g., \(\{1, 2\}\)) such that:
\begin{itemize}
    \item Every edge within each tree \( T_s \) is \emph{bichromatic} (i.e., its endpoints have different colors);
    \item Every edge connecting two distinct branch sets \(T_u\) and \(T_v\) (corresponding to an edge \(uv \in E(H)\)) is \emph{monochromatic} (i.e., its endpoints have the same color).
\end{itemize}

\noindent We call an odd $H$-model \emph{special} if all branch sets that are isolated vertices share the same color. Evidently, when $H$ is a clique, each odd $H$-model is special.

We say a graph $G$ contains $H$ as an (odd) minor 
if it includes a subgraph that is an (odd) $H$-model. For simplicity, we use $G\succeq_m H$ ($G\succeq_{om} H$) to represent the fact that $H$ is an (odd) minor of $G$.

In 1943, Hadwiger \cite{HU} proposed the following famous conjecture.

\begin{conjecture}{\rm(Hadwiger's conjecture)}\label{HC}
 Every graph $G$ contains $K_{\chi(G)}$ as a minor.
\end{conjecture}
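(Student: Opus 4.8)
Hadwiger's conjecture is one of the deepest open problems in graph theory and no complete proof is known, so rather than claim a full resolution I will outline the strategy that is natural in the setting of this paper—graphs of independence number two—and indicate exactly where the difficulty concentrates. The plan is to exploit the extremal structure forced by $\alpha(G)=2$ and then attempt an explicit construction of a $K_{\chi(G)}$-model.

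First I would record the structural consequence of $\alpha(G)=2$: the complement $\overline{G}$ is triangle-free, and a proper colouring of $G$ is precisely a partition of $V(G)$ into singletons and non-edges of $G$, i.e.\ a matching of $\overline{G}$ together with its exposed vertices. Hence $\chi(G)=|V(G)|-\nu(\overline{G})$, where $\nu$ denotes the matching number. Fixing a maximum matching $M$ of $\overline{G}$, the set $U$ of $M$-exposed vertices is independent in $\overline{G}$ (otherwise $M$ could be enlarged), so $U$ induces a clique in $G$; these $|V(G)|-2\nu(\overline{G})$ vertices already furnish pairwise adjacent singleton branch sets.

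Next, to reach the target of $\chi(G)=|V(G)|-\nu(\overline{G})$ branch sets I must organise the $2\nu(\overline{G})$ $M$-covered vertices into $\nu(\overline{G})$ further branch sets, each connected in $G$ and adjacent in $G$ to every other branch set. The obvious attempt—using the matched pairs themselves—fails immediately, since each such pair is a non-edge of $G$; so I would instead seek a second system of $\nu(\overline{G})$ vertex-disjoint connected subgraphs of $G$ spanning the covered vertices (for instance a matching of $G$ supplemented by a few longer paths, since that induced subgraph again has independence number two but need not admit a perfect matching, as $K_3\cup K_3$ shows), and then verify pairwise adjacency using triangle-freeness of $\overline{G}$: two prospective branch sets fail to be adjacent only when $\overline{G}$ contains a complete bipartite graph between them, and the absence of triangles sharply restricts such configurations.

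The hard part—indeed the reason the statement remains open even for $\alpha(G)=2$—is precisely this last adjacency bookkeeping: guaranteeing \emph{simultaneously} that all $|V(G)|-\nu(\overline{G})$ branch sets are pairwise joined is a global constraint that local matching arguments do not control, and examples show that naive repairings can leave many edges missing. I therefore expect the genuine work to lie either in an inductive or discharging scheme that sacrifices a few branch sets to mend non-adjacencies, or in bootstrapping from weaker near-clique structures such as the complete bipartite (odd) minor results established in this paper; absent such an argument, the construction above yields only a partial clique minor rather than the full $K_{\chi(G)}$, which is exactly the gap that Hadwiger's conjecture asks us to close.
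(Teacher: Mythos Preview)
The statement you were asked to address is Conjecture~\ref{HC} (Hadwiger's conjecture), and the paper does \emph{not} prove it; it is recorded there only as a famous open problem motivating the results on odd complete bipartite minors. So there is no ``paper's own proof'' to compare against, and your decision not to claim a resolution is exactly right.

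That said, your outline is a clean and accurate account of why even the special case $\alpha(G)\le 2$ (Conjecture~\ref{conj2} in the paper) remains open. The identity $\chi(G)=|V(G)|-\nu(\overline{G})$ is correct, the observation that the $M$-exposed vertices form a clique in $G$ is correct, and you put your finger squarely on the genuine obstruction: once you try to group the $M$-covered vertices into $\nu(\overline{G})$ further branch sets, the \emph{global} pairwise adjacency requirement is not controlled by local arguments. This is precisely the difficulty that the paper sidesteps by aiming only for complete \emph{bipartite} (odd) minors $K_{\ell,\chi(G)-\ell}$ rather than the full clique, and your final paragraph anticipates exactly that retreat. There is no gap in your reasoning beyond the one you explicitly flag, which is the gap in the conjecture itself.
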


Hadwiger's conjecture has been proven to be true only for graphs with chromatic number at most 6, but is likely difficult to prove for larger values, given that proofs for the cases of $\chi(G)=5$ and $\chi(G)=6$ already depend on the Four Color Theorem.
For more background on Hadwiger's conjecture, we refer to Seymour's 2016-survey \cite{P} and a recent update on the developments by Norin \cite{N}.

To address the difficulty of resolving Hadwiger's conjecture for general chromatic numbers, Woodall \cite{W} in 2001 (and independently Seymour \cite{AN}) proposed a weaker variant that relaxes the requirement of a complete minor.

\begin{conjecture}{\rm(\cite{W,AN})}\label{conj1w}
Every graph $G$ contains $K_{\ell, \chi(G)-\ell}$ as a minor for any positive integer $\ell$ with $\ell< \chi(G)$.
\end{conjecture}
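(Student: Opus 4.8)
The statement is the Woodall--Seymour conjecture in full generality, so the plan is to reduce to the most structured possible counterexample and then extract the biclique minor from the colouring itself. Write $t = \chi(G)$. First I would pass to a minimal counterexample for a fixed $\ell$: if $G$ is vertex-minimal with $G \not\succeq_m K_{\ell, t - \ell}$, then $G$ is $\chi$-vertex-critical, so $\delta(G) \geq t - 1$, and by Mader's theorem $G$ contains a subgraph that is roughly $(t-1)/4$-connected. Since $K_{\ell, t-\ell}$ and $K_{t-\ell,\ell}$ are the same graph, I may assume $\ell \leq t/2$. The base case $\ell = 1$ is immediate: $K_{1, t-1}$ is already a subgraph of any graph with a vertex of degree $\geq t - 1$ (take that vertex as the centre and $t-1$ of its neighbours as singleton leaves), and criticality supplies $\Delta(G) \geq \delta(G) \geq t - 1$.

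For the inductive step I would build the two sides of the biclique separately. The aim is to select one side $A_1, \dots, A_\ell$ as $\ell$ vertex-disjoint connected sets dominating a large, highly connected region $W$, and then to partition $W$ (after contractions) into $t - \ell$ connected pieces $B_1, \dots, B_{t-\ell}$, each touching the neighbourhood of every $A_i$. The colouring must drive this last count: since $G$ admits no proper $(t-1)$-colouring, a Kempe-chain analysis anchored at the sets $A_i$ should obstruct any decomposition of $W$ into fewer than $t - \ell$ mutually reachable pieces. Concretely, I would induct on $\ell$, starting from a $K_{\ell-1,\,t-\ell+1}$ model and rerouting one branch set of the large side to free a connected set that can serve as the new $A_\ell$, while keeping the remaining $t-\ell$ sets adjacent to all of $A_1, \dots, A_\ell$.

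The honest expectation is that this conversion of colouring into a balanced biclique is precisely where the difficulty concentrates, and that it is of the same order as Hadwiger's conjecture itself. The purely extremal route does not close the gap: the Kostochka--Thomason bound forces a balanced $K_{s,s}$ minor only once the average degree exceeds order $s\sqrt{\log s}$, yet a $t$-critical graph may have average degree only about $t$ (witness $K_t$), while for $\ell \approx t/2$ the target $K_{\ell,\,t-\ell}$ has minor-forcing threshold of order $t\sqrt{\log t}$, by the same random-graph obstruction that blocks clique minors. Density therefore falls short by a $\sqrt{\log t}$ factor, so a successful proof must exploit the colouring structure (criticality, high connectivity, Kempe chains) rather than edge count; this is the main obstacle. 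In the regimes where the gap is controllable---small $\ell$, or classes of bounded independence number such as the one treated in this paper---the plan above can be pushed through, while the general case is exactly the crux that keeps the conjecture open.
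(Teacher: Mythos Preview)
The statement you are attempting to prove is labelled \emph{Conjecture} in the paper, not a theorem: the paper does not offer a proof of it and treats it as open in general. The paper's contribution is confined to the special case $\alpha(G)\le 2$ (Theorem~\ref{biminor}, from the authors' earlier work, and its odd-minor strengthening Theorem~\ref{obiminor2}). So there is no ``paper's own proof'' of Conjecture~\ref{conj1w} to compare against.

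Turning to your proposal itself: the reduction to a vertex-critical minimal counterexample and the base case $\ell=1$ are fine. The genuine gap, which you candidly identify, is the inductive step. The sentence ``a Kempe-chain analysis anchored at the sets $A_i$ should obstruct any decomposition of $W$ into fewer than $t-\ell$ mutually reachable pieces'' is not an argument; it is a hope. Kempe chains control the interaction of \emph{two} colour classes at a time, and there is no known mechanism by which they force the existence of $t-\ell$ disjoint connected sets each adjacent to all of $A_1,\dots,A_\ell$ simultaneously. Likewise, ``rerouting one branch set of the large side to free a connected set that can serve as the new $A_\ell$'' presupposes that a $K_{\ell-1,t-\ell+1}$ model can always be rearranged without destroying adjacencies to the remaining $t-\ell$ sets; nothing in criticality or in Mader's connectivity bound guarantees this. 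You correctly observe that the extremal route is off by a $\sqrt{\log t}$ factor, so the proof must use colouring structure---but that is precisely what no one knows how to do, and your outline does not supply the missing idea. As written this is a thoughtful survey of the obstacles, not a proof; it would be honest to present it as such rather than as a proof proposal.
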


While this weaker conjecture seeks to simplify the original problem, another line of research has focused on strengthening Hadwiger's conjecture by imposing stricter conditions on the minor structure.
Gerards and Seymour (see \cite{TB}, Section 6.5) proposed the following strengthening of Hadwiger's conjecture, called Odd Hadwiger's conjecture.

\begin{conjecture}{\rm(Odd Hadwiger's conjecture)}\label{OHC}
 Every graph $G$ contains $K_{\chi(G)}$ as an odd minor.
\end{conjecture}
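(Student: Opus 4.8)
The plan is to attack Conjecture~\ref{OHC} through a vertex-minimal counterexample $G$: a graph with $\chi(G)=t$ satisfying $G\not\succeq_{om}K_t$ and minimizing $|V(G)|$. The first, routine, step is to record the criticality consequences of minimality: $G$ is $t$-vertex-critical, so $\delta(G)\ge t-1$ and (for $t\ge 3$) $G$ is $2$-connected. The real work begins with boosting this to connectivity on the order of $t$. I would argue that a small separation $(A,B)$ cannot exist by combining odd $K_t$-models obtained from the smaller sides $G[A]$ and $G[B]$ via minimality. Unlike the ordinary-minor setting, this gluing is delicate: each partial model carries its own $2$-colouring, and these must be reconciled along the separator $A\cap B$, so one must track how the branch sets meeting $A\cap B$ distribute into the two colour classes and, where necessary, globally swap the two colours on one side before gluing.

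The central technical device is to keep the global $2$-colouring as the primary object. Recall that $G\succeq_{om}K_t$ means there is a $2$-colouring $c$ of the model's vertices under which every tree edge is bichromatic and every linking edge monochromatic. Fixing such a $c$, the operation replacing ordinary contraction becomes the contraction of a \emph{properly $2$-coloured} (hence bipartite) connected piece, and all bookkeeping reduces to maintaining one consistent global $c$. Across a separation this forces the two partial colourings to agree on $A\cap B$ up to a global colour-swap on one side, and carrying the colour-class split of the branch sets meeting $A\cap B$ is precisely the extra data absent from the ordinary-minor argument. Getting this exchange calculus right is what would make the connectivity reductions, as well as the degree and common-neighbourhood arguments, rigorous.

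With $G$ highly connected, I would invoke the structural description of graphs with no odd $K_t$-minor due to Geelen, Gerards, Reed, Seymour and Vetta (and its refinements): every such graph admits a tree-decomposition whose torsos become, after deleting a bounded apex set, assembled from a bipartite part and pieces almost-embeddable in a fixed surface. The aim would then be to colour each torso with at most $t-1$ colours — two colours for the bipartite part, a bounded number for the surface part, absorbing the apices — and to clique-sum these colourings along the adhesion sets, using high connectivity to control their interaction. A successful $(t-1)$-colouring of $G$ contradicts $\chi(G)=t$.

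The hard part, and the reason Conjecture~\ref{OHC} remains open, is exactly this final link. The best available structural/colouring results yield only $\chi(G)=O(t\sqrt{\log t})$ for odd-$K_t$-minor-free graphs, far from the exact $\chi\le t-1$ needed here, and the parity constraint is the obstruction. In the ordinary setting one contracts connected pieces freely to drive down the chromatic number; in the odd setting every contracted piece must be properly $2$-coloured and every surviving linking edge must respect the single global colouring, so the clique-sum of per-torso colourings can be blocked by an \emph{odd} cycle threading several torsos, and no current mechanism converts the surface–apex structure into the sharp bound. I therefore expect that essentially all the difficulty concentrates in this parity-aware colouring step, well beyond the connectivity reductions above; a complete proof would require a genuinely new argument, and the independence-number-two methods of the present paper should be read as evidence for, rather than a route to, the full conjecture.
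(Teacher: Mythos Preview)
The statement you are attempting is Conjecture~\ref{OHC}, the Odd Hadwiger conjecture, which the paper lists as an \emph{open} conjecture and does not prove. There is no ``paper's own proof'' to compare against: the paper explicitly records that the conjecture is unresolved for $\chi(G)\ge 6$ and that the best known asymptotic bound is $O(t\log\log t)$. The paper's actual contribution (Theorem~\ref{obiminor2}) is a much weaker statement --- an odd complete \emph{bipartite} minor $K_{\ell,\chi(G)-\ell}$ under the additional hypothesis $\alpha(G)\le 2$ --- proved by entirely different, elementary means (packing induced $P_3$'s, Hall-type matching arguments, and a criticality/anti-connectedness reduction via Stehl\'ik's theorem).

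To your credit, you recognise this yourself: your final paragraph concedes that the approach stalls at the colouring step, that current structural results give only $O(t\sqrt{\log t})$, and that ``a complete proof would require a genuinely new argument.'' What you have written is therefore not a proof proposal but a research programme with an explicitly acknowledged gap at its core. The connectivity reductions and the $2$-colouring bookkeeping you sketch are reasonable preliminary observations, but the decisive step --- turning the Geelen--Gerards--Reed--Seymour--Vetta structure theorem into a sharp $(t-1)$-colouring --- is precisely the part nobody knows how to do, and you offer no mechanism for it. In short: there is no proof here to assess, and the paper makes no claim to have one either.
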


Given that Conjecture \ref{OHC} strengthens Hadwiger's long standing and notoriously difficult conjecture, it is unsurprising that it has resisted all resolution attempts to date. Catlin \cite{PC} established the cases for \( \chi(G) \leq 4 \), and Guenin announced a proof for \( \chi(G) = 5 \) (strengthening the Four Color Theorem) in 2004 \cite{P}.
Similar to Hadwiger's conjecture, the exact formulation of Conjecture \ref{OHC} remains unresolved for
\( \chi(G) \geq 6 \). Nevertheless, substantial progress has been achieved in asymptotic analyses
(\cite{JB,SN,LP2,S}), culminating in the ground-breaking work of \cite{S},
which proves an $O(t \log \log t)$ upper bound on the chromatic number of graphs excluding odd \( K_t \)-minors.
For more results around the Odd Hadwiger's
conjecture, we refer the interested reader to Section 7 of the survey \cite{P}.

Hadwiger's conjecture remains unsolved for a specific class of graphs, namely those with an independence number of at most two.

\begin{conjecture}\label{conj2}
Every graph $G$ with $\alpha(G)\leq2$ contains $K_{\chi(G)}$ as a minor.
\end{conjecture}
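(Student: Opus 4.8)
The plan is to pass to the complement. Writing $H:=\overline{G}$, the hypothesis $\alpha(G)\le 2$ is exactly the statement that $H$ is triangle-free, and a proper colouring of $G$ is precisely a partition of $V(H)$ into cliques, hence---since $H$ has no triangle---into edges and singletons. Minimising the number of parts therefore gives $\chi(G)=n-\nu$, where $n=|V(G)|$ and $\nu=\nu(H)$ denotes the maximum matching number of $H$. Thus the target $K_{\chi(G)}$ has exactly $t:=n-\nu$ branch sets, and because $H$ is sparse (so $n\le 2\chi(G)$) those branch sets must be small; this sparsity of the complement is the structural leverage I would exploit.

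First I would fix a maximum matching $M=\{a_1b_1,\dots,a_\nu b_\nu\}$ of $H$ and let $U$ be the set of the $n-2\nu$ unmatched vertices. By maximality $U$ is independent in $H$, hence a clique in $G$, so declaring each $u\in U$ a singleton branch set already yields a $K_{n-2\nu}$-model. It then remains to organise the $2\nu$ matched vertices into exactly $\nu$ further branch sets, each connected in $G$, pairwise $G$-adjacent, and each $G$-adjacent to every vertex of $U$. Since every pair $a_ib_i$ is a non-edge of $G$, no branch set may consist of a single matching pair, so the matching edges must be combined; this is where the whole difficulty concentrates.

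The combinatorial core I would target is to pair the edges of $M$ into quadruples $\{a_i,b_i,a_j,b_j\}$ (handling a leftover edge separately if $\nu$ is odd) and, within each, split the four vertices into two $G$-connected branch sets joined by a $G$-edge. Because $H$ is triangle-free, at most two of the four cross pairs are non-edges of $G$ and they form a matching, so a split into two $G$-connected pairs always exists. However, forcing the two pairs to be \emph{adjacent} can already fail locally: if $H$ induces a $4$-cycle on $\{a_i,b_i,a_j,b_j\}$, the only connected split uses the two diagonals of that cycle, and these are mutually non-adjacent in $G$. I would therefore phrase the goal as the existence, in a triangle-free graph carrying a perfect matching, of a bundling of the matched vertices into $G$-connected branch sets in which no two sets induce a $C_4$ of $H$ and no set is $H$-dominated by a vertex of $U$.

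I expect the main obstacle to be exactly this global compatibility requirement: the harmful configurations are the $4$-cycles (the complete bipartite $K_{2,2}$'s) of the triangle-free complement, which are in no way excluded by the triangle-free hypothesis and which can block two branch sets from being adjacent. Showing that the matching edges can always be bundled---using larger branch sets or short $G$-paths when size-two sets are forced into a diagonal obstruction---so as to defeat every $K_{2,2}$ obstruction while still dominating $U$ is precisely the point at which Conjecture~\ref{conj2} has resisted proof. As a more flexible alternative I would try to bootstrap from the already-available complete bipartite minors $G\succeq_m K_{\ell,\chi(G)-\ell}$: starting from such a model one would attempt to install the missing within-side adjacencies by local rerouting through $U$, again invoking $\alpha(G)\le 2$ to supply the required connections. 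The essential hardness, I believe, lies in carrying out all these reroutings simultaneously without re-creating a non-adjacency elsewhere.
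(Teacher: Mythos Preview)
The statement you are attempting to prove is a \emph{conjecture}, not a theorem: the paper does not prove it, and indeed explicitly presents it as an open special case of Hadwiger's conjecture (``Hadwiger's conjecture remains unsolved for a specific class of graphs, namely those with an independence number of at most two''). So there is no ``paper's own proof'' to compare against.

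Your proposal is not a proof either, and you say so yourself. The preliminary reductions are correct and standard: passing to the triangle-free complement $H=\overline{G}$, the identity $\chi(G)=n-\nu(H)$, and the observation that the unmatched vertices $U$ form a $G$-clique providing $n-2\nu$ singleton branch sets. But from that point on you only describe a \emph{plan}---pair up matching edges into quadruples and split each into two $G$-connected branch sets---and then correctly identify the obstruction that kills it: a $4$-cycle of $H$ on $\{a_i,b_i,a_j,b_j\}$ forces the diagonal split, and the two diagonals are non-adjacent in $G$. You then explicitly write that resolving this global compatibility ``is precisely the point at which Conjecture~\ref{conj2} has resisted proof'' and that ``the essential hardness\ldots lies in carrying out all these reroutings simultaneously.'' That is an accurate diagnosis of why the problem is open, but it means your write-up is a discussion of the difficulty, not a proof. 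The suggested bootstrap from the $K_{\ell,\chi(G)-\ell}$ minors is likewise only a hope: you give no mechanism for installing the missing within-side adjacencies without destroying existing ones, and none is known.
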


The significance of this class of graphs with independence number two was first pointed out by Duchet and Meyniel \cite{PH} and Mader independently, and later highlighted in Seymour's survey paper \cite{P}.
According to Seymour in \cite{P}, 
Conjecture \ref{conj2} could be key to the Hadwiger's conjecture. He suggested that if the conjecture is valid for these graphs, it might be valid more broadly. As an evidence for Conjecture \ref{conj1w}, Chen and Deng \cite{CD} have shown the following result.

\begin{theorem} \label{biminor}{\rm(\cite{CD})}
Let $G$ be a graph with $\alpha(G)\leq2$. For any positive integer $\ell$ with $\ell < \chi(G)$, we have $G \succeq_{m} K_{\ell,\chi(G)- \ell}$.
\end{theorem}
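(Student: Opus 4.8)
The plan is to prove the odd–minor strengthening of Theorem~\ref{biminor}, in the slightly stronger form: if $\alpha(G)\le2$ then for every integer $\ell$ with $0\le\ell\le\chi(G)$, $G$ has a \emph{special} odd $K_{\ell,\chi(G)-\ell}$-model; and to argue by induction on $|V(G)|$. Write $t=\chi(G)$. Passing to the complement, $\overline G$ is triangle-free, and since a proper colouring of $G$ is the same thing as a partition of $V(G)$ into cliques of $\overline G$ — i.e.\ into edges and single vertices of $\overline G$ — we have $t=|V(G)|-\nu$ with $\nu=\nu(\overline G)$ the matching number of $\overline G$. The extreme cases $\ell\in\{0,t\}$ are immediate (take $t$ vertices as singleton branch sets of one colour), and if $\alpha(G)=1$ then $G=K_t\supseteq K_{\ell,t-\ell}$, which is trivially a special odd model; so from now on $\alpha(G)=2$ and $1\le\ell\le t-1$.

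I would then run two reductions keyed to the Gallai--Edmonds decomposition of $\overline G$. If some vertex $v$ lies in every maximum matching of $\overline G$, then $\nu(\overline G-v)=\nu-1$, hence $\chi(G-v)=(|V(G)|-1)-(\nu-1)=t$; since $\alpha(G-v)\le2$, induction gives a special odd $K_{\ell,t-\ell}$-model of $G-v$, which is also one of $G$. Otherwise every vertex is missed by some maximum matching, so Gallai--Edmonds forces $\overline G=F_1\sqcup\dots\sqcup F_r$ with each $F_j$ factor-critical, and $G=\overline{F_1}\vee\dots\vee\overline{F_r}$ is a join. If $r\ge2$, each $\overline{F_j}$ is a proper induced subgraph with $\alpha\le2$ and $\chi(\overline{F_j})=|V(F_j)|-\nu(F_j)=:t_j$, and $\sum_j t_j=t$; choosing $\ell_j\in[0,t_j]$ with $\sum_j\ell_j=\ell$ and applying induction to each piece, I would glue the resulting special odd $K_{\ell_j,t_j-\ell_j}$-models across the join. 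Specialness is exactly what makes the gluing legitimate: after flipping colours so that every singleton branch set in every piece gets the same colour, a branch set from one piece and a branch set from another are joined by a monochromatic edge automatically — if either has two vertices it uses both colours, and if both are singletons they share the common colour — while same-piece pairs are already handled; so the union is a (still special) odd $K_{\ell,t-\ell}$-model of $G$.

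The remaining case $r=1$ is where essentially all the difficulty lies: now $\overline G$ is factor-critical and triangle-free, $|V(G)|=2\nu+1$, $t=\nu+1$, and deleting \emph{any} vertex $v$ drops the chromatic number to $t-1$ (since $\overline G-v$ has a perfect matching). The plan is still to delete a vertex $v$, obtain by induction a special odd $K_{\ell',\,t-1-\ell'}$-model of $G-v$, and reinsert $\{v\}$ as one new branch set — on the $B$-side with $\ell'=\ell$, or on the $A$-side with $\ell'=\ell-1$. Since $\alpha(G)=2$, the set $N_{\overline G}(v)$ of $G$-non-neighbours of $v$ is a clique of $G$, and the reinsertion succeeds unless some branch set on the chosen side has all of its vertices of $v$'s colour inside that clique, so that no monochromatic edge joins $v$ to it. Ruling this out for at least one of the two sides is the main obstacle, and I expect it to require either a strengthened induction hypothesis (say, realising the model with all branch sets of size at most two, plus control over which clique-vertices carry which colour) or a dedicated argument exploiting a shortest odd cycle of $\overline G$ — necessarily of length $\ge5$ — to furnish the slack needed to reroute an offending branch set. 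Everything before this step is bookkeeping resting on the decomposition of $\overline G$ together with the additivity of $\chi$ under join, of $\nu$ under disjoint union, and of special odd minors under join; Theorem~\ref{biminor} may be thought of as the colour-blind skeleton onto which the two-colouring must be installed.
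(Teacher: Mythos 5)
Your reductions are sound as far as they go, and they do roughly parallel the paper's: the identity $\chi(G)=|V(G)|-\nu(\overline G)$ for $\alpha(G)\le 2$, the deletion of a vertex covered by every maximum matching of $\overline G$, and the gluing of special odd models across a join are all legitimate, and your Gallai--Edmonds route is a reasonable alternative to the paper's reduction in Theorem~\ref{main-lemma2}, which instead passes to a $\chi(G)$-vertex-critical, anti-connected counterexample and invokes Stehl\'ik's theorem (Theorem~\ref{thmMS}) to force $|V(G)|\ge 2\chi(G)-1$; both routes funnel everything into the same bottleneck, namely $|V(G)|=2\chi(G)-1$, i.e.\ $\overline G$ factor-critical.

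The problem is that this bottleneck is the entire content of the theorem, and you leave it unproven: your own text says the reinsertion of $v$ can fail when some branch set on the chosen side has all of its vertices of $v$'s colour inside the clique $M(v)$ of non-neighbours of $v$, and that you ``expect'' a strengthened induction hypothesis or an odd-girth argument to fix this. That obstacle is genuine and there is no evident local repair; indeed the known proofs do not induct by deleting a vertex and reinserting it in this case at all. The paper's Theorem~\ref{main-lemma} builds the model for $n=2\chi(G)-1$ globally: Lemma~\ref{thm0} (a Blasiak-style cut argument, which already produces an odd $K_{\lceil n/2\rceil}$-model from a small cutset plus a matching) reduces to the $\lceil n/2\rceil$-connected case; then either $\omega(G)\ge\frac{n+3}{4}$ and the Chudnovsky--Seymour seagull-packing theorem (Theorem~\ref{thmc}) applies, or $\omega(G)<\frac{n+3}{4}$ and Lemma~\ref{lem1} from \cite{CD} packs the required number of vertex-disjoint induced $3$-vertex paths; colouring the ends of each path with one colour and the centres with the other turns these seagulls directly into the branch sets of a special odd $K^{\ell}_{\ell,\lceil n/2\rceil-\ell}$-model, with a separate exchange argument (the choice of $\mathcal P_v$ maximizing $|N_G[v]-V(\mathcal P_v)|$) handling the tight case $n=4\ell-1$. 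Without this packing machinery, or some replacement for it, your plan for the factor-critical case is a statement of intent rather than a proof, so the proposal as written does not establish the theorem (even in its plain-minor form, where the same reinsertion difficulty arises with a branch set lying entirely inside $M(v)$).
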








In this paper, we extend Theorem \ref{biminor} to the odd minor version.


\begin{theorem} \label{obiminor2}
Let $G$ be a graph with $\alpha(G)\leq2$. For any positive integer $\ell$ with $\ell < \chi(G)$, we have $G \succeq_{om} K_{\ell,\chi(G)- \ell}$.
\end{theorem}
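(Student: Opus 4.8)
The natural strategy is to bootstrap from the non-odd statement, Theorem~\ref{biminor}, and upgrade an ordinary $K_{\ell,\chi(G)-\ell}$-model to an odd one, exploiting the rigidity forced by $\alpha(G)\le 2$. Fix $\ell$ and write $t=\chi(G)$, $k=t-\ell$; we may assume $\ell\le k$. First I would reduce to the case where $G$ is vertex-critical, so that $\chi(G-v)=t-1$ for every $v$, and recall the basic structural fact that when $\alpha(G)\le 2$ the complement $\overline G$ is triangle-free, so $G$ is very dense: every set of three vertices spans at least one edge, and the non-neighbourhood of any vertex is a clique. A second standard reduction: in a graph with $\alpha(G)\le2$, any minor model can be taken to have all branch sets of size at most $2$ (a branch set of size $\ge3$ already contains an edge of $G$ inside it, and one shows the tree can be pruned), and moreover two branch sets of size $1$ whose single vertices are non-adjacent cannot be ``independent'' of a third, which is exactly the leverage one uses to control the two-colouring.

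The heart of the argument is the colouring. Given an ordinary $K_{\ell,k}$-model with parts $\{A_1,\dots,A_\ell\}$ and $\{B_1,\dots,B_k\}$, I want a $2$-colouring of the union of the branch sets so that every internal edge of a branch set is bichromatic and every chosen cross-edge (one per pair $A_iB_j$) is monochromatic. For a size-$2$ branch set the two vertices must get opposite colours; for a size-$1$ branch set the colour is a free binary choice. The constraint is: for each edge $A_iB_j$ of $K_{\ell,k}$ the selected connecting edge must be monochromatic. This is naturally a $2$-SAT / parity system: think of each branch set $T$ as a variable $x_T\in\{0,1\}$ recording the colour of a distinguished vertex (and, for size-$2$ sets, the other vertex is $1-x_T$); each cross-edge imposes a linear equation $x_{A_i}+x_{B_j}=c_{ij}$ over $\mathbb F_2$, where $c_{ij}$ depends on which endpoints were chosen. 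Since $K_{\ell,k}$ is bipartite hence has no odd cycles, any such system on the ``contracted'' graph is consistent \emph{provided we are allowed to re-route which connecting edge we use} — and here is where density enters: because $\alpha(G)\le2$, between a size-$2$ set and any other set there are typically edges realizing \emph{both} parities, giving us the freedom to satisfy every equation. So the plan is: (i) start from the model of Theorem~\ref{biminor}; (ii) normalise all branch sets to size $\le 2$; (iii) show that for each pair $A_iB_j$ there is a connecting edge of the parity we need, using $\alpha(G)\le 2$ to rule out the obstruction that all connecting edges have the wrong parity; (iv) conclude by solving the (now freely satisfiable) $\mathbb F_2$ system, which is possible because $K_{\ell,k}$ is triangle-free and in fact bipartite.

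The subtle point — and what I expect to be the main obstacle — is step (iii) combined with the interaction between different pairs: re-routing the connecting edge for $A_iB_j$ changes the equation $c_{ij}$, but a single branch set $A_i$ participates in $k$ equations simultaneously, so we cannot adjust them independently. The correct way to organise this is probably to not fix the model first and then recolour, but to carry the two-colouring along inside the proof of Theorem~\ref{biminor}: re-run whatever inductive or extremal argument produced the bipartite minor, and at each step where a branch set is formed or an edge is contracted, maintain a partial proper/monochromatic colouring, using $\alpha(G)\le2$ to guarantee at each step that a compatible choice exists (the key local fact being that among any three vertices two are adjacent, so one never gets ``stuck'' needing a non-edge). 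A cleaner alternative, if the proof of Theorem~\ref{biminor} is a black box, is to prove the following self-contained lemma: \emph{if $\alpha(G)\le 2$ and $G\succeq_m K_{\ell,k}$, then $G\succeq_{om}K_{\ell,k}$}, by taking a model minimising $\sum_T|T|$, observing all branch sets then have size $\le2$, and analysing the auxiliary $\mathbb F_2$-system together with the swap moves ``exchange the two vertices of a size-$2$ set'' and ``slide a connecting edge''; the minimality plus $\alpha(G)\le2$ should force the system to be solvable, because an unsolvable odd cycle of equations would exhibit a forbidden independent triple or a redundancy contradicting minimality. I would pursue this lemma route, and flag the consistency analysis of the parity system under the allowed moves as the step requiring the most care, particularly when $\ell$ is small (the cases $\ell=1,2$ likely need separate, easy direct arguments, matching how Theorem~\ref{biminor} itself is typically organised).
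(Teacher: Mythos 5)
There is a genuine gap: the entire weight of your argument rests on the unproved lemma ``if $\alpha(G)\le 2$ and $G\succeq_m K_{\ell,k}$ then $G\succeq_{om}K_{\ell,k}$,'' and the sketch you give for it does not go through. First, the normalisation ``all branch sets may be taken of size at most $2$'' is not justified, and it is telling that the paper's own odd models use branch sets of size $3$ (induced $3$-vertex paths): with $\alpha(G)\le2$ the two nonadjacent ends of such a path attach to every other branch set, which is exactly the leverage you lose by insisting on size $\le 2$. Second, the parity analysis is wrong as stated: the system $x_{A_i}+x_{B_j}=c_{ij}$ over $\mathbb F_2$ is solvable iff the $c_{ij}$ sum to zero around \emph{every} cycle of $K_{\ell,k}$, and $K_{\ell,k}$ has many $4$-cycles, so bipartiteness (absence of odd cycles) buys you nothing. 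Third, the claimed re-routing freedom is not delivered by $\alpha(G)\le2$: if $A_i=\{a,a'\}$ is a size-$2$ branch set (so $aa'\in E(G)$) and $B_j=\{b\}$ is a singleton, the triple $\{a,a',b\}$ already contains the edge $aa'$, so independence number $2$ gives no information about whether both $ab$ and $a'b$ exist; the parity of the unique available connecting edge may simply be forced, and a single $A_i$ sits in $k$ such constraints. You flag this interaction yourself as ``the step requiring the most care,'' but that step is the theorem; as written the proposal is a programme, not a proof, and it is not even clear that the bridging lemma is true at the level of generality you invoke it.

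For comparison, the paper does not upgrade the ordinary model from Theorem~\ref{biminor} at all. It reduces to a $\chi(G)$-vertex-critical, anti-connected counterexample (the join case is handled by gluing \emph{special} odd models, where all singleton branch sets share a colour), applies Stehl\'{\i}k's theorem to force $|V(G)|\in\{2\chi(G)-1,2\chi(G)\}$, and then proves a purely order-based structure theorem (Theorem~\ref{main-lemma}): every $n$-vertex graph with $\alpha=2$ contains a special odd $K^{\ell}_{\ell,\lceil n/2\rceil-\ell}$-model. That theorem is obtained from a Blasiak-style connectivity lemma for odd clique minors and the Chudnovsky--Seymour seagull packing theorem, the point being that a packing of disjoint induced $P_3$'s plus single vertices yields an odd model for free (ends and singletons coloured $2$, centres coloured $1$), with $\alpha\le2$ guaranteeing the monochromatic cross-edges. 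If you want to salvage your route, you would need to actually prove the bridging lemma, including a correct treatment of the $\mathbb F_2$-consistency over $4$-cycles; the paper's seagull-based construction is the mechanism that circumvents exactly this difficulty.
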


Actually, Chen and Deng in \cite{CD} showed that each graph $G$ with $\alpha(G)\leq2$ contains a minor of $K^{\ell}_{\ell,\chi(G)- \ell}$ for each positive integer $\ell$ with $2\ell \leq \chi(G)$, where  $K_{m,n}^m$ is obtained from the disjoint union of a $K_m$ and an independent set on $n$ vertices by adding all of the possible edges between them. Considered this fact, we conjecture that a similar result also holds for odd minor.

\begin{conjecture}\label{conj3}
Let $G$ be a graph with $\alpha(G)\leq2$. For any positive integer $\ell$ with $2\ell \leq \chi(G)$, we have $G \succeq_{om} K^{\ell}_{\ell,\chi(G)- \ell}$.
\end{conjecture}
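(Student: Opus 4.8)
The plan is to build the model around a maximum matching $M$ of the complement $\overline{G}$. Writing $t=\chi(G)$, the identity $\chi(G)=|V(G)|-\nu(\overline{G})$, which holds because $\alpha(G)\le 2$ forces every colour class to have size at most $2$, yields an optimal colouring whose classes are the $p=\nu(\overline{G})$ matched pairs $\{x_k,y_k\}$ (non-edges of $G$) together with the $s=t-p$ unmatched vertices; by maximality of $M$ the unmatched vertices are pairwise $G$-adjacent, so they form a clique $Z$ of size $s$. I will also record the consequence of $\alpha(G)\le 2$ that I expect to use repeatedly: any two of these colour classes are joined by at least one edge of $G$ (for two pairs $\{x_k,y_k\},\{x_{k'},y_{k'}\}$, the triple $\{x_k,x_{k'},y_{k'}\}$ cannot be independent). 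Thus the $t$ classes are pairwise adjacent as sets, and the only obstruction to a full $K_t$-minor is that the matched pairs are not internally connected.

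The decisive reduction is that the independent side costs nothing. Since the $n=t-\ell$ branch sets indexed by the independent side of $K^{\ell}_{\ell,n}$ are pairwise nonadjacent, they impose no monochromaticity constraint among themselves; each merely has to send a monochromatic edge to every clique-side branch set. I therefore plan to realise these $n$ branch sets as single vertices, drawn first from $Z$ and then from the matched pairs, and to give each of them the common colour $1$; the domination property above guarantees the required edges to the clique side, and making these branch sets singletons of a single colour also renders the final model \emph{special}, as the statement implicitly requires. Any clique-side branch set that happens to be a single vertex of $Z$ also receives colour $1$, so all isolated-vertex branch sets agree. The entire difficulty is then concentrated in the clique side: producing $\ell$ vertex-disjoint connected branch sets that are pairwise joined only by monochromatic edges under one global proper $2$-colouring of their trees, i.e.\ an odd $K_\ell$-model that can in addition be joined monochromatically to the prescribed colour-$1$ singletons. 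The hypothesis $2\ell\le t$ is what ensures that after reserving $\ell$ pairs (or vertices of $Z$) for the clique side there remain enough vertices both to serve as connectors and to furnish the $t-\ell$ singletons of the independent side.

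The hard part will be the all-monochromatic requirement on the clique side, and this is exactly where the odd version departs from the non-odd $K^{\ell}_{\ell,n}$-minor of \cite{CD}. In the non-odd setting a matched pair $\{x_k,y_k\}$ is linked into a branch set through any common neighbour, but in an odd model that neighbour must take the opposite colour, and $\alpha(G)\le 2$ then forces it to be adjacent to a same-side vertex of some other clique-side pair, producing a forbidden bichromatic edge; so naive path-connectors cannot succeed in a dense graph. My intended route around this is to choose the connectors and the colouring simultaneously by induction on $t$ (equivalently on the number of matched pairs), carrying the invariant that every vertex having a neighbour in another clique-side branch set lies in a single class of the $2$-colouring, i.e.\ that all clique-side contacts are \emph{single-coloured}. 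At each step I would peel off one pair together with a suitable connector, or one vertex of $Z$, reduce $\chi$ in a controlled way, apply the inductive hypothesis, and then reinsert the removed vertices while extending the colouring, using the domination property to reroute any bichromatic contact into a monochromatic one. Showing that this reinsertion never forces an odd conflict among the $\binom{\ell}{2}$ pairwise parity constraints is the crux; I expect it to require a careful, and probably global, choice of the colour carried by $Z$ and by the connector vertices, and it is the step most likely to demand a genuinely new idea rather than a direct adaptation of Theorem \ref{obiminor2}.
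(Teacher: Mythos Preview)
The statement you are attempting is Conjecture~\ref{conj3}, which the paper explicitly poses as \emph{open}; there is no proof of it in the paper to compare against. The paper establishes only two weaker facts: Theorem~\ref{main-lemma}, which yields a special odd $K^{\ell}_{\ell,\lceil n/2\rceil-\ell}$-model (replacing $\chi(G)$ by the smaller quantity $\lceil n/2\rceil$), and Theorem~\ref{main-lemma2}, which yields a special odd $K_{\ell,\chi(G)-\ell}$-model (dropping the clique on the $\ell$-side).

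Your proposal is not a proof either, and you acknowledge this: the final paragraph concedes that the crux --- assembling an odd $K_\ell$-model on the clique side whose contacts with the singleton side stay monochromatic --- ``is the step most likely to demand a genuinely new idea.'' That step \emph{is} the conjecture. Your reduction of the independent side to same-colour singletons is sound and matches exactly what the paper does in proving Theorem~\ref{main-lemma}: there the clique-side branch sets are vertex-disjoint induced $P_3$'s with both ends in one colour class and the middle vertex in the other, so that every cross edge (end-to-end between two $P_3$'s, or end-to-singleton) is forced monochromatic by $\alpha(G)\le2$. Pushing this from $\lceil n/2\rceil$ up to $\chi(G)$ would require more disjoint induced $P_3$'s than Lemma~\ref{lem1} and Theorem~\ref{thmc} guarantee, or some alternative clique-side structure; your inductive ``peel and reinsert'' sketch does not supply one, since the reinsertion step is precisely where the $\binom{\ell}{2}$ parity constraints collide and no mechanism is offered to resolve them.
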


As an evidence of Conjecture \ref{conj3}, in Section 2, we prove that each $n$-vertex graph with independence number at most two contains an odd minor of $ K^{\ell}_{\ell,\lceil\frac{n}{2}\rceil- \ell}$ for any positive integer $\ell$ with $2\ell \leq \lceil\frac{n}{2}\rceil$ (see Theorem \ref{main-lemma}).


\section{Proof of Theorem \ref{obiminor2}}
For a graph $G$, let  $\kappa(G)$ be the connectivity of $G$. For any set $X\subseteq V(G)$, let $G[X]$ denote the subgraph of $G$ induced by $X$, and $G \backslash X$ denote the subgraph $G[V \backslash X]$. For any set $\mathcal{A}$ of vertex disjoint subgraphs of $G$, let $V(\mathcal{A})$ denote the union of the vertex sets of subgraphs in $\mathcal{A}$.
Given disjoint vertex sets $A$ and $B$, we say that $A$ is \emph{complete} to $B$ if  each vertex in $A$ is adjacent to all vertices in $B$.
We denote by $N_{G}(A)$ the vertex set in $V(G)-A$ that has adjacent vertices in $A$.
Set $N_{G}[A]:=N_{G}(A)\cup A$.
When there is no danger of confusion, all subscripts will be omitted.
For simplicity, when $A = \{a\}$, the sets $N(\{a\})$ and $N[\{a\}]$ are denoted by $N(a)$  and $N[a]$, respectively.

In \cite{J}, Blasiak shows that for any $n$-vertex graph $G$ with $\alpha(G) \leq 2$, the absence of a minor of $K_{\lceil\frac{n}{2}\rceil}$ implies $\kappa(G) \geq \lceil\frac{n}{2}\rceil$. 
Following the ideas in Blasiak's proof, we can prove that similar result holds for odd minors.

\begin{lemma} \label{thm0}
Let $G$ be an $n$-vertex graph with $\alpha(G)\leq2$. If $G$ is not $\lceil\frac{n}{2}\rceil$-connected, then $G\succeq_{om} K_{\lceil\frac{n}{2}\rceil}$. %
\end{lemma}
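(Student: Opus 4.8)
The plan is to mimic Blasiak's induction on $n$, but carrying along the extra two-colouring bookkeeping required for odd minors. Suppose $G$ is an $n$-vertex graph with $\alpha(G)\le 2$ that is not $\lceil n/2\rceil$-connected, and let $S$ be a vertex cut with $|S|=\kappa(G)<\lceil n/2\rceil$. Write $G\setminus S=A\cup B$ where $A,B$ are nonempty, no edge joins $A$ to $B$, and (by $\alpha(G)\le 2$) both $G[A]$ and $G[B]$ are cliques; without loss of generality $|A|\le|B|$. Since $|A|+|B|=n-|S|>n-\lceil n/2\rceil=\lfloor n/2\rfloor$, we get $|B|\ge \lceil(n-|S|)/2\rceil$, and a short count shows $|B|+|S|\ge\lceil n/2\rceil$ with room to spare; the side $B$ together with $S$ is where we will build most of the model, while $A$ is used to "double up" branch sets so as to reach $\lceil n/2\rceil$ of them.

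The key structural step is the following. Because $G[A]$ is a clique and $\alpha(G)\le 2$, every vertex of $A$ has at most one non-neighbour in $S$, so each vertex $a\in A$ is complete to $S$ minus at most one vertex. Order $A=\{a_1,\dots,a_p\}$ and $S=\{s_1,\dots,s_q\}$ and pair up each $a_i$ with a distinct $s_i$ (possible after reindexing, since $p\le q$) so that, whenever $i\ne j$, $a_i$ is adjacent to $s_j$. Then for each $i\le p$ form the branch set $T_i=\{a_i,s_i\}$, an edge; the remaining vertices $s_{p+1},\dots,s_q$ of $S$ are singleton branch sets. Every pair of these $|S|$ branch sets is joined by an edge in $G$ (two singletons of $S$ are adjacent because $\alpha(G)\le2$ and $A\ne\emptyset$ forces $G[S]$ to be a clique; a singleton $s_j$ and a pair $\{a_i,s_i\}$ are joined via $a_is_j$ when $j\le p$, $i\ne j$, or via $s_is_j$ otherwise; two pairs $\{a_i,s_i\},\{a_j,s_j\}$ via $s_is_j$). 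Finally take $B=\{b_1,\dots,b_r\}$, a clique; since $|S|+|B|\ge\lceil n/2\rceil$ we may discard excess vertices of $B$ and keep exactly $\lceil n/2\rceil-|S|$ singleton branch sets from $B$, each joined to every branch set of $S$-type because $B$ is complete to $N_G(B)\cap S$ — here we need that after discarding, the retained $B$-vertices are complete to all of $S$, which follows since each $b\in B$ misses at most one vertex of $S$ and $|S\setminus N(b)|\le 1$, so we instead route edges through $A$ or argue by a matching exactly as on the $A$ side. Altogether this is a $K_{\lceil n/2\rceil}$-model.

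It remains to check the model is \emph{odd}, i.e.\ to two-colour it. Each nontrivial branch set is a single edge $\{a_i,s_i\}$; colour $a_i$ with $1$ and $s_i$ with $2$, colour every singleton branch set with $1$. All tree-edges are then bichromatic. For a cross edge we must check it is monochromatic: cross edges among singletons are fine (both colour $1$); a cross edge from a singleton ($1$) to a pair must land on the $a$-endpoint (colour $1$) — this is exactly why in the pairing above we always route the singleton-to-pair edge through $a_i$ (using $a_is_j$ with $i\ne j$) rather than through $s_i$; and a cross edge between two pairs must go $a_i$–$a_j$, which exists because $G[A]$ is a clique, so again colour $1$ on both ends. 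The $B$-singletons are colour $1$ and connect to the pairs through their $a$-endpoints for the same reason. Hence the colouring witnesses an odd $K_{\lceil n/2\rceil}$-model; moreover all isolated-vertex branch sets share colour $1$, so the model is special, which is automatic as $K_{\lceil n/2\rceil}$ is a clique.

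The main obstacle is the routing/colouring compatibility: we need \emph{simultaneously} that the pairing of $A$ with $S$ avoids all the forbidden non-edges and that every cross edge incident to a pair can be chosen at its $a$-endpoint (colour $1$), while every tree edge stays bichromatic. This is a Hall-type matching condition — for each $a_i$ there is at most one forbidden partner in $S$, and at most one forbidden partner in $A$ for cross-edge routing — so a careful but elementary defect-Hall argument should deliver a consistent choice; verifying it handles the degenerate cases ($|A|=|S|$, small $n$, or $B$ vertices missing a vertex of $S$) is where the real work lies.
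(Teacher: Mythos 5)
Your proposal has two genuine problems: the structural claims it rests on, and the architecture of the model itself. First, from $\alpha(G)\le 2$ you cannot conclude that a vertex $a\in A$ has at most one non-neighbour in $S$, nor that $G[S]$ is a clique, nor that each $b\in B$ misses at most one vertex of $S$. The hypothesis only forces the non-neighbourhood of any vertex to induce a clique: $a$ may be non-adjacent to many vertices of $S$, provided those vertices are pairwise adjacent and complete to $B$ (recall $a$ already has no neighbours in $B$), and two non-adjacent vertices of $S$ are perfectly possible as long as every other vertex sees at least one of them. So the ``at most one forbidden partner'' premise behind your defect-Hall pairing is false, and a pairing in which every $a_i$ is adjacent to all $s_j$ with $j\ne i$ need not exist.

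Second, and more fatally, the shape of your model cannot be made odd. Since $S$ separates $A$ from $B$, a singleton branch set $b\in B$ has no edge to any $a_i$; its only possible connection to a pair $\{a_i,s_i\}$ is an edge $bs_i$, whose ends you coloured $1$ and $2$, i.e.\ a bichromatic cross edge, which is forbidden. Recolouring the $B$-singletons with colour $2$ instead breaks the cross edges between $B$-singletons and $S$-singletons. No routing or matching refinement repairs this; the two-vertex branch sets have to be built on the other side of the cut. That is what the paper's proof does: take a minimum cutset $X$, use $\alpha(G)\le 2$ and minimality to split $X$ into $X_L$ complete to $L$ and $X_R$ complete to $R$, assume $|V(L)|+|X_L|\ge\lceil\frac{n}{2}\rceil$, and match $X_L$ into the clique $R$ (Hall's condition holds, since a violating set would yield a smaller cutset). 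The branch sets are the vertices of $L$ as singletons plus the matching edges as pairs; colouring $V(L)\cup(V(M)\cap X_L)$ with $1$ and the $R$-ends with $2$ makes every pair--pair cross edge run inside the clique $R$ (colour $2$) and every pair--singleton cross edge use the completeness of $X_L$ to $L$ (colour $1$), giving an odd clique model on $|V(L)|+\min(|X_L|,|V(R)|)\ge\lceil\frac{n}{2}\rceil$ branch sets. Your write-up also concedes that the routing/colouring compatibility is left unverified, and indeed that is exactly where the construction breaks down.
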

\begin{proof}
Suppose $G$ is not $\lceil\frac{n}{2}\rceil$-connected.
Choose a vertex cutset $X$ of $G$ as small as possible. Since $\alpha(G)\leq2$ and $X$ is a cutset of $G$, the graph $G\backslash X$ has exactly two components, say $L$ and $R$, and both of them are cliques. Since $X$ is a minimal cutset, every vertex in $X$ is either complete to $L$ or complete to $R$.
Let $X_L, X_R$ partition $X$ such that $X_L$ ($X_R$, resp.) is complete to $L$ ($R$, resp.). Without loss of generality we may assume $|V(L)| + |X_L| \geq |V(R)| + |X_R|$. 
Then $|V(L)|+|X_L|\geq \lceil\frac{n}{2}\rceil$. 

We claim that for any subset $A \subseteq X_L$ of size at most $|V(R)|$, the set $A$ is matchable into $V(R)$. Suppose not. By Hall's matching condition, there exists $S \subseteq A$ such that $|S| > |N(S) \cap V(R)|$. Then the set $(X - S) \cup (N(S) \cap V(R))$ is a vertex cutset of $G$. 
This contradicts the minimality of $X$ since $|(X - S) \cup (N(S) \cap V(R))| < |X|$.

Let $M$ be a matching from $X_L$ to $V(R)$ of size $\min(|X_L|, |V(R)|)$. Since the ends of every pair of edges of $M$ in $R$ are adjacent, all vertices of $L$, together with all edges of $M$, form an odd complete minor model, because 
we can assign color 1 to $V(L)\cup (V(M) \cap X_L)$, while assigning color 2 to $V(M) \cap V(R)$.
Since $|X| < \lceil\frac{n}{2}\rceil$, we have $|L|+|V(R)|\geq \lceil\frac{n}{2}\rceil$.
Moreover, since $|V(L)|+|X_L|\geq \lceil\frac{n}{2}\rceil$, the size of the odd complete minor is
$$|V(L)| + \min(|X_L|, |V(R)|) = \min\left(|V(L)| + |X_L|, |V(L)| + |V(R)|\right) \geq \lceil\frac{n}{2}\rceil.$$ This proves Lemma \ref{thm0}.
\end{proof}

\begin{theorem} \label{thmc}{\rm(\cite{MP})}
Let $G$ be an $n$-vertex graph with $\alpha(G)\leq2$ and \( G \) be $\lceil\frac{n}{2}\rceil$-connected. If the largest clique \( K \) in \( G \) satisfies
\[
\frac{3}{2} \left\lceil \frac{n}{2} \right\rceil - \frac{n}{2} \leq |V(K)| \leq \lceil\frac{n}{2}\rceil,
\]
then there exist \( \lceil\frac{n}{2}\rceil - |V(K)| \) pairwise disjoint induced $3$-vertex-paths in \( G\backslash V(K) \).
\end{theorem}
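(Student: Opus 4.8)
The plan is to pass to the complement of $G'=G\backslash V(K)$, exploit that this complement is triangle‑free, build the required paths around a maximum matching of it, and control disjointness by a Hall‑type argument together with the connectivity hypothesis.

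Write $k=\lceil n/2\rceil$, $c=|V(K)|$ and $t=k-c$. If $t\le 0$ there is nothing to prove, so assume $t\ge 1$. Set $G'=G\backslash V(K)$; I would record three facts used throughout: (i) $\alpha(G')\le 2$, so $H=\overline{G'}$ is triangle‑free; (ii) every clique of $G'$ is a clique of $G$, hence has at most $|V(K)|=c$ vertices by maximality of $K$; (iii) $|V(G')|=n-c\ge 3t$, which is just a restatement of the hypothesis $\tfrac32 k-\tfrac n2\le|V(K)|$ (and is the obvious necessary condition for $t$ vertex‑disjoint $P_3$'s to fit inside $G'$). Note that a triple of vertices of $G'$ induces a $P_3$ exactly when it spans a single edge of $H$, and that, by (i), for every non‑edge $xy$ of $G'$ each remaining vertex of $G'$ is $G'$‑adjacent to $x$ or to $y$.

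Next I would fix a maximum matching $N_0=\{x_1y_1,\dots,x_my_m\}$ of $H$ and let $U$ be the set of vertices of $H$ that it misses. Then $U$ is independent in $H$ (otherwise $N_0$ extends), i.e.\ a clique of $G'$, so $|U|\le c$ by (ii). From $2m+|U|=n-c$ we get $m=\tfrac12(n-c-|U|)$; since $|U|\le c$, and in the case $n$ odd the value $|U|=c$ is impossible (it would force $n-2c$ to be even), a short parity check gives $m\ge k-c=t$. Thus $G'$ has at least $t$ pairwise disjoint non‑edges coming from $N_0$. The heart of the proof is then to choose a $t$‑subset $I\subseteq\{1,\dots,m\}$ and, for each $i\in I$, a vertex $w_i$ that is $G'$‑adjacent to both $x_i$ and $y_i$, with the $w_i$ pairwise distinct and avoiding $\{x_i,y_i:i\in I\}$; the paths $x_iw_iy_i$, $i\in I$, are then the desired $t$ disjoint induced $P_3$'s. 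The usable ``middle'' vertices are the clique $U$ (which never clashes with any $x_i$ or $y_i$, since those are covered by $N_0$) and the endpoints of the non‑selected matching edges; by (i), a vertex fails to be a legal middle for $\{x_i,y_i\}$ only if it is non‑adjacent in $G'$ to exactly one of $x_i,y_i$. I would carry out this selection by a Hall‑type argument — for instance assigning middles out of $U$ first and, once $U$ is exhausted, drawing the remaining ones from endpoints of matching edges which are thereby declared non‑selected — using the clique bound (ii) and the counting inequality (iii) (which guarantees $|U|+2(m-t)\ge t$, i.e.\ that there are always enough middle slots) to keep the process alive. The one genuinely bad configuration is a non‑edge $x_iy_i$ with no common $G'$‑neighbour: then $V(G')\setminus\{x_i,y_i\}$ splits into $N_{G'}(x_i)\setminus N_{G'}(y_i)$ and $N_{G'}(y_i)\setminus N_{G'}(x_i)$, and either some edge of $G'$ runs between these two sets — yielding an induced $P_4$ through $x_iy_i$, hence an induced $P_3$ on three of those four vertices that can be substituted — or no such edge exists, in which case $\{x_i,y_i\}\cup V(K)$ is a cutset of $G$ of size $c+2$, contradicting $(t+c)$‑connectivity of $G$ as soon as $t\ge 3$.

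I expect the main obstacle to be exactly this last step: arranging the $t$ completions to be simultaneously disjoint while juggling the two competing sources of middle vertices. It is genuinely tight at the lower extreme $|V(K)|=\tfrac32\lceil n/2\rceil-\tfrac n2$, where $|V(G')|=3t$ and one must partition all of $V(G')$ into induced $P_3$'s with no slack at all; there both hypotheses — $K$ being a \emph{largest} clique and $G$ being $\lceil n/2\rceil$‑connected — have to be used in full strength, and a handful of degenerate cases ($t\le 2$, or $|U|$ so small that middles come almost entirely from non‑selected matching edges) would be checked directly.
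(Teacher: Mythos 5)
This statement is not proved in the paper at all: it is quoted from Chudnovsky and Seymour's ``Packing seagulls'' paper \cite{MP}, where precisely this assertion (the $\lceil n/2\rceil$-connected case of their seagull-packing analysis) takes a substantial, multi-page argument. Your proposal, by contrast, reduces everything to ``a Hall-type argument'' that is never formulated, and that reduction is exactly where the theorem's difficulty lives, so there is a genuine gap. Concretely: after fixing the maximum matching $N_0$ of $H=\overline{G\backslash V(K)}$ (your facts (i)--(iii) and the bound $m\ge t$ are fine), you must \emph{simultaneously} choose which $t$ matching edges to use and assign them pairwise distinct middles drawn from $U$ and from endpoints of the \emph{non-selected} edges. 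This is not a fixed bipartite Hall problem: the candidate pool for middles changes with the selection, and the counting inequality $|U|+2(m-t)\ge t$ only says the total number of candidates is large enough, not that every subfamily of pairs sees enough eligible candidates. Nothing in (i)--(iii) rules out, say, a block of selected pairs all of whose potential middles are endpoints of the same few matching edges (a vertex is ineligible for pair $i$ merely by being $H$-adjacent to one of $x_i,y_i$, and triangle-freeness of $H$ does not prevent this from happening massively). Tellingly, your main line never uses the $\lceil n/2\rceil$-connectivity except in a fallback, whereas the true proof needs it in an essential way; a hypothesis that is known to be necessary but unused in the core argument is a strong sign the core argument cannot work as stated.

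The fallback steps also do not close. When $x_iy_i$ has no common $G'$-neighbour, the induced $P_4$ you extract uses vertices $a,b$ that may be endpoints of other selected edges or previously assigned middles, so ``substituting'' it re-opens the same global disjointness problem rather than solving it; and the cutset $\{x_i,y_i\}\cup V(K)$ only contradicts $\lceil n/2\rceil$-connectivity when both sides $A,B$ are nonempty \emph{and} $t\ge 3$, leaving $t\in\{1,2\}$ (and the one-sided case $A=\emptyset$ or $B=\emptyset$) unresolved --- these are not trivial degeneracies, since even producing one or two seagulls outside a maximum clique uses the hypotheses. So the proposal is a plausible opening reduction (complement, maximum matching, clique bound on the uncovered set), but the selection-of-middles step would need an entirely new idea, and the known proof in \cite{MP} proceeds through considerably heavier case analysis than anything sketched here.
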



\begin{lemma} \label{thm2}
Let $G$ be an $n$-vertex graph with $\alpha(G)\leq2$. If $n$ is odd and $\omega(G)\geq \frac{n+3}{4}$, then   
$G\succeq_{om} K_{\lceil\frac{n}{2}\rceil}$.
\end{lemma}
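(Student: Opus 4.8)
The plan is to exploit the two structural inputs we already have: Lemma \ref{thm0}, which lets us dispose of any graph that fails to be $\lceil n/2\rceil$-connected, and Theorem \ref{thmc}, which converts a sufficiently large clique in a highly connected graph into a controlled supply of induced $3$-vertex paths. So first I would assume, towards the conclusion, that $G$ is $\lceil n/2\rceil$-connected, since otherwise Lemma \ref{thm0} gives the desired odd $K_{\lceil n/2\rceil}$ minor immediately. Write $n=2m-1$ (so $\lceil n/2\rceil=m$) and let $K$ be a largest clique, with $k=|V(K)|$; by hypothesis $k\geq\frac{n+3}{4}=\frac{2m+1}{4}$, which is exactly the lower bound $\frac32\lceil n/2\rceil-\frac n2$ appearing in Theorem \ref{thmc} (after substituting $n=2m-1$). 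If $k\geq m$ then $G$ already contains $K_m$ as a subgraph, hence trivially as an odd minor (a subgraph is an odd model of a clique, colouring all branch sets — single vertices — with the same colour), and we are done. So we may assume $\frac32\lceil n/2\rceil-\frac n2\le k\le m-1<m=\lceil n/2\rceil$, and Theorem \ref{thmc} applies.

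Next, apply Theorem \ref{thmc} to obtain $m-k$ pairwise disjoint induced $3$-vertex paths $P^1,\dots,P^{m-k}$ in $G\backslash V(K)$, say $P^i = a_i b_i c_i$. The idea is to build an odd $K_m$-model whose branch sets are the $k$ singletons of $V(K)$ together with the $m-k$ paths $P^i$; this is exactly the construction underlying the non-odd statement, and the point is to check it can be made \emph{odd}. For the colouring: give every vertex of $K$ colour $1$; in each path $P^i$ give $b_i$ colour $2$ and $a_i,c_i$ colour $1$, so that the two internal tree-edges $a_ib_i$ and $b_ic_i$ are bichromatic as required. Then within each branch set the edges are correctly bichromatic (singletons have no edges). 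It remains to verify that for every pair of branch sets there is a monochromatic connecting edge. Since $\alpha(G)\le 2$ and $V(K)\cup\{a_i,c_i : i\}$ — one endpoint set — need to be examined: any two of the "colour-$1$ endpoints" coming from distinct branch sets are adjacent or have a common... actually the cleaner route is: the set $S$ consisting of $V(K)$ and, from each $P^i$, the two end-vertices $a_i,c_i$, is not independent between any two branch sets because $\alpha(G)\le 2$ forces lots of edges; precisely, for branch sets $T,T'$ we need an edge between them that is colour-$1$–colour-$1$ or colour-$2$–colour-$2$. For two singletons of $K$ that edge exists in $K$. For a singleton $v\in V(K)$ and a path $P^i$: we need $v$ adjacent to $a_i$ or $c_i$ (a colour-$1$ edge); since $\{v,b_i\}$ plus any one of $a_i,c_i$ — hmm, here $v$ might be adjacent only to $b_i$. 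This is the delicate point.

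The main obstacle, then, is ensuring the \emph{monochromatic} connection between a clique-vertex branch set and a path branch set, and between two path branch sets. The escape is to use that $G$ has independence number two more aggressively. Concretely: if some $v\in V(K)$ is adjacent to $b_i$ but to neither $a_i$ nor $c_i$, then $\{v,a_i,c_i\}$ would be independent unless $a_i c_i\in E(G)$ — but $P^i$ is an \emph{induced} path, so $a_ic_i\notin E(G)$, hence $\{v,a_i,c_i\}$ is independent, contradicting $\alpha(G)\le 2$. Therefore every $v\in V(K)$ is adjacent to $a_i$ or to $c_i$, giving the required colour-$1$ edge; the same argument shows every two induced paths $P^i,P^j$ have a colour-$1$ edge between their endpoint sets (if $a_i$ is nonadjacent to both $a_j,c_j$ then $\{a_i,a_j,c_j\}$ is independent, contradiction), and likewise a clique vertex meets each path. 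Thus the colouring above exhibits the desired odd $K_m$-model, and re-reading the size count, the model has $k+(m-k)=m=\lceil n/2\rceil$ branch sets, proving $G\succeq_{om}K_{\lceil n/2\rceil}$. I would double-check one boundary issue — that $m-k\ge 0$ and that when $n+3$ is not divisible by $4$ the bound $k\ge\frac{n+3}4$ still lands inside the interval of Theorem \ref{thmc} after taking integer parts — but these are routine arithmetic verifications with $n=2m-1$.
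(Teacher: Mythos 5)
Your proposal is correct and follows essentially the same route as the paper: reduce to the $\lceil n/2\rceil$-connected case via Lemma \ref{thm0}, apply Theorem \ref{thmc} to a maximum clique $K$, and take the vertices of $K$ plus the $\lceil n/2\rceil-|V(K)|$ induced paths as branch sets, coloured with the clique vertices and path endpoints in one colour and the path midpoints in the other. The only difference is that you explicitly verify the monochromatic connecting edges using $\alpha(G)\le 2$ and the paths being induced, a check the paper leaves implicit — and that verification is exactly right.
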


\begin{proof}
Assume not. Then $\omega(G) < \lceil\frac{n}{2}\rceil$ and it follows from Lemma \ref{thm0} that $G$ is $\lceil\frac{n}{2}\rceil$-connected. Choose a clique $K$ having $\omega(G)$ vertices. Since $n$ is odd, $\frac{3}{2} \left\lceil \frac{n}{2} \right\rceil - \frac{n}{2}=\frac{n+3}{4} \leq \omega(G) < \lceil\frac{n}{2}\rceil$.
By Theorem \ref{thmc}, there exist \( \lceil\frac{n}{2}\rceil - \omega(G) \) pairwise disjoint induced 3-vertex-paths in \( G\backslash V(K) \). Let $\mathcal{P}$ be the collection of these paths.
To verify  that $G$ contains a special odd model of $K_{\lceil\frac{n}{2}\rceil}$,
we choose each member of \(\mathcal{P}\) and each vertex in $V(K)$ as the branch sets of $K_{\lceil\frac{n}{2}\rceil}$, and assign color 1 to $V(K)$ and the end vertices of each path in $\mathcal{P}$, and color 2 to the internal vertices of each path in $\mathcal{P}$. Hence, $G$ contains a special odd model of $K_{\lceil\frac{n}{2}\rceil}$, which is a contradiction.
\end{proof}








\begin{lemma} \label{lem1}{\rm(\cite{CD})}
Let $G$ be an $n$-vertex  graph with $\alpha(G)\leq2$, with $\omega(G)<\lceil\frac{n}{2}\rceil$, and with $\kappa(G)\geq \lceil\frac{n-1}{4}\rceil$. For any integer $\ell$ with $\ell \leq \lceil\frac{n}{4}\rceil$, if $n\geq 4\ell+1$ is odd, then $G$ has $\ell$ pairwise disjoint induced $3$-vertex-paths $P_3$.
\end{lemma}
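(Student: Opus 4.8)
The plan is to argue by extremality on the number of disjoint induced $P_3$'s. Let $\mathcal{Q}$ be a maximum collection of pairwise vertex-disjoint induced $P_3$'s in $G$, put $i:=|\mathcal{Q}|$ and $W:=V(\mathcal{Q})$, so that $|W|=3i$, and suppose toward a contradiction that $i<\ell$. Since $n\ge 4\ell+1$, the graph $G':=G\backslash W$ has $n-3i\ge n-3(\ell-1)\ge \ell+4$ vertices. A standard fact is that a graph with independence number at most two and no induced $P_3$ is a disjoint union of at most two cliques; since $\mathcal{Q}$ is maximum, $G'$ has no induced $P_3$, so $G'=A\sqcup B$ with $A,B$ cliques (with $B$ possibly empty) and, say, $|A|\ge|B|$, hence $|A|\ge\lceil(\ell+4)/2\rceil\ge 3$.

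Next I would analyse how $W$ attaches to $A$ and $B$, using $\alpha(G)\le 2$ repeatedly. If $B\ne\emptyset$, then for $a\in A$ and $b\in B$ the set $\{a,b\}$ is independent, so every $w\in W$ is adjacent to $a$ or to $b$; letting $a$ and $b$ vary, each $w\in W$ must be complete to $A$ or complete to $B$. If $B=\emptyset$, split $W$ into the set $W_1$ of vertices complete to $A$, the set $W_0$ of vertices with no neighbour in $A$, and the set $W_2$ of the remaining vertices; a direct use of $\alpha(G)\le 2$ shows that $W_0$ induces a clique with no edge to $A$. In the clean situation where $W_2=\emptyset$ (and its analogue when $B\ne\emptyset$), one combines these facts with the observation that, in the absence of an augmentation, any two non-adjacent vertices of $W_1$ must be the two ends of one common member of $\mathcal{Q}$, so that the complement of $G[W_1]$ is a matching of size at most $i$. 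This forces $G$ to contain a clique of size at least roughly $|A|+|W_1|-i$ (or $|W_0|$ on the other side); since these sizes together account for about $n$ vertices while $i<\ell\le\lceil n/4\rceil$, the larger of them exceeds $\lceil n/2\rceil$, contradicting $\omega(G)<\lceil n/2\rceil$.

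The step I expect to be the main obstacle is ruling out the residual case where some $w\in W$ has both a neighbour $a$ and a non-neighbour $a'$ in $A$ (that is, $W_2\ne\emptyset$, or the analogous vertices relative to $B$). Then $\{w,a,a'\}$ induces a $P_3$, call it $R$, and the idea is to \emph{augment}: delete from $\mathcal{Q}$ the member $Q$ containing $w$, insert $R$, and re-complete the family using the two remaining vertices of $Q$ together with one more vertex of the still-large clique $A$, contradicting the maximality of $\mathcal{Q}$. Making this rigorous needs a short case check according to whether $w$ is an end or the centre of $Q$ — the centre case being delicate, since $Q\backslash\{w\}$ is then an independent pair needing two helper vertices rather than one — together with careful bookkeeping that enough of $A$ (and $B$) remains free. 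This is exactly where the hypotheses $n\ge 4\ell+1$ (so that $|A|\ge\ell+2$ survives two deletions) and $\kappa(G)\ge\lceil\frac{n-1}{4}\rceil$ (used to bound the size of $\mathcal{Q}$ from below and to control the few vertices of $W$ whose neighbours all lie inside $W$, which are the ones that could block an augmentation) come into play. Once augmentation is shown always to succeed outside the clean case, $\mathcal{Q}$ already has size at least $\ell$, which is the lemma.

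Equivalently, the whole argument can be run in the complement: $\overline G$ is triangle-free, an induced $P_3$ of $G$ is exactly an induced copy of $K_2+K_1$ in $\overline G$ on the same three vertices, and $\omega(G)<\lceil n/2\rceil$ reads $\alpha(\overline G)\le\lceil n/2\rceil-1$. One first checks the easy fact that a triangle-free graph with no induced $K_2+K_1$ is either edgeless or connected and complete bipartite, both incompatible with $\alpha(\overline G)\le\lceil n/2\rceil-1$ once $n\ge 3$; hence $\overline G$, and the graph left after each deletion, always contains such a configuration, and the maximum-family and augmentation machinery above is what upgrades one configuration to $\ell$ pairwise disjoint ones.
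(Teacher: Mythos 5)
You are judging this against a statement the paper itself does not prove (Lemma~\ref{lem1} is imported from \cite{CD}), so your attempt has to stand on its own, and as written it does not: the step you yourself flag as ``the main obstacle'' is precisely the content of the lemma, and it is left as a plan rather than an argument. Maximality of $\mathcal{Q}$ is never actually contradicted. If $w\in W$ has a neighbour $a'$ and a non-neighbour $a$ in $A$, swapping the member $Q\ni w$ for $R=\{a,a',w\}$ and ``re-completing'' from the two leftover vertices of $Q$ plus vertices of $A$ can simply fail: when the leftovers are complete to the remaining part of $A$ every candidate triple induces a triangle (not an induced $P_3$), and when $w$ is the centre the two leftovers form a non-edge and again may only produce triangles or edgeless/one-edge triples with $A$. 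The companion claim, that two non-adjacent vertices of $W_1$ lying in \emph{different} members of $\mathcal{Q}$ force an augmentation, destroys two members and must rebuild two, which is nowhere addressed. Saying that the hypotheses $n\ge 4\ell+1$ and $\kappa(G)\ge\lceil\frac{n-1}{4}\rceil$ ``come into play'' here is not a proof; no mechanism is given by which connectivity rescues these exchanges.

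Worse, the part you present as clean does not close numerically. In the case $B=\emptyset$, $W_2=\emptyset$ one has $|A|+|W_1|+|W_0|=n$ with $|W|=3i$, and since $W_0$ is a clique it contains at most two vertices of each member of $\mathcal{Q}$, so $|W_0|\le 2i\le 2\ell-2$; the two cliques you produce have sizes $|W_0|$ and $|A|+|W_1|-i=n-|W_0|-i\ge n-3i$. Their sum is about $n-i$, so the larger is only about $(n-i)/2$, and in the extremal range $i=\ell-1$, $n=4\ell+1$ the $A$-side clique can be as small as roughly $\ell+4$ while $\lceil n/2\rceil=2\ell+1$; the assertion that ``the larger of them exceeds $\lceil n/2\rceil$'' is therefore false, and no contradiction with $\omega(G)<\lceil\frac{n}{2}\rceil$ follows (the same arithmetic problem occurs in the $B\neq\emptyset$ case). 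So both the augmentation machinery and the final counting have genuine gaps, and the connectivity hypothesis, which must be the driving force of any correct proof, is used only rhetorically. The complement reformulation at the end changes nothing of substance.
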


\begin{theorem} \label{main-lemma}
Let $G$ be an $n$-vertex graph with $\alpha(G)=2$. For any positive integer $\ell$ with $2\ell \leq \lceil\frac{n}{2}\rceil$, $G$ contains a special odd model of $ K^{\ell}_{\ell,\lceil\frac{n}{2}\rceil- \ell}$. 
\end{theorem}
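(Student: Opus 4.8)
The plan is to construct a \emph{special} odd model of $K^{\ell}_{\ell,\lceil n/2\rceil-\ell}$ in which the $\ell$ clique-side branch sets are induced copies of $P_3$ (each coloured $1,2,1$ along the path) and the $\lceil n/2\rceil-\ell$ independent-side branch sets are single vertices, all coloured $1$. The point of using induced $3$-paths is that $\alpha(G)=2$ supplies the required monochromatic connecting edges for free: for an induced path $x$-$y$-$z$ and any vertex $w\notin\{x,y,z\}$ the set $\{x,z,w\}$ is not independent (as $x\not\sim z$), so $w$ is adjacent to $x$ or to $z$, giving a $1$-$1$ edge; applying this with $w$ a vertex of a second induced $3$-path also shows that any two vertex-disjoint induced $3$-paths are joined by a monochromatic edge. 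Hence such a configuration is automatically a valid special odd model (it is special because every single-vertex branch set is coloured $1$), provided the branch sets exist and are disjoint, i.e.\ provided $G$ contains $\ell$ pairwise disjoint induced $3$-paths and $3\ell+(\lceil n/2\rceil-\ell)\le n$.

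Write $m=\lceil n/2\rceil$. First I would make the routine reductions. If $\omega(G)\ge m$ then $K_m\subseteq G$, which is trivially a special odd $K_m$-model, and since $K^{\ell}_{\ell,m-\ell}$ is a spanning subgraph of $K_m$ we may restrict that model (same branch sets and colours, keeping only the connecting edges of $K^{\ell}_{\ell,m-\ell}$) to finish; likewise if $G$ is not $m$-connected then Lemma \ref{thm0} gives a special odd $K_m$-model, and we restrict again. If $n$ is even, deleting any vertex yields $G'$ with $\alpha(G')\le 2$ and $\lceil\tfrac{n-1}{2}\rceil=m\ge 2\ell$, so the even case follows from the odd case (the subcase $\alpha(G')\le 1$ being immediate). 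So assume from now on that $n$ is odd, $\omega(G)<m$ and $\kappa(G)\ge m$.

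Now split on $\omega(G)$. If $\omega(G)\ge\tfrac{n+3}{4}$, Lemma \ref{thm2} already provides a special odd $K_m$-model, which restricts to the one we want. If $\omega(G)<\tfrac{n+3}{4}$ and $n\ge 4\ell+1$, then $\kappa(G)\ge m\ge\lceil\tfrac{n-1}{4}\rceil$, $\omega(G)<m$, $\ell\le\lceil\tfrac{n}{4}\rceil$ and $n\ge 4\ell+1$ is odd, so Lemma \ref{lem1} yields $\ell$ pairwise disjoint induced $3$-paths; and $n\ge 4\ell+1$ gives $2\ell\le\tfrac{n-1}{2}$, hence $3\ell+(m-\ell)=2\ell+m\le n$, so the construction of the first paragraph applies and this case is done. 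The only remaining possibility, given $2\ell\le m$ and $n$ odd, is $n=4\ell-1$ (so $m=2\ell$ and we must build a special odd $K^{\ell}_{\ell,\ell}$-model); here also $\omega(G)\le\ell$, and the clean construction fails by exactly one vertex since $3\ell+\ell=n+1$.

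I expect this extremal case $n=4\ell-1$ to be the main obstacle. My plan there is to apply Lemma \ref{lem1} with parameter $\ell-1$ (its hypotheses hold: $\kappa(G)\ge 2\ell\ge\ell=\lceil\tfrac{n-1}{4}\rceil$, $\omega(G)<2\ell=m$, $\ell-1\le\lceil\tfrac{n}{4}\rceil$ and $n\ge 4(\ell-1)+1$) to obtain $\ell-1$ pairwise disjoint induced $3$-paths $\mathcal Q$, after which $G^{*}:=G\setminus V(\mathcal Q)$ has $\ell+2$ vertices and $\alpha(G^{*})\le 2$. If $G^{*}$ contains a vertex $v$ of degree at least $\ell$ in $G^{*}$, we are done: take $\mathcal Q\cup\{v\}$ as the clique side and $\ell$ neighbours of $v$ in $G^{*}$ as the independent side, colouring $v$ and those neighbours $1$. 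The difficulty is that an arbitrary $\alpha\le 2$ graph on $\ell+2$ vertices need not have such a vertex (for instance $\overline{C_{\ell+2}}$), so one must either choose $\mathcal Q$ so as to absorb all but one of the non-neighbours of some vertex — noting that the clique of non-neighbours of any vertex meets each induced $3$-path in at most two (mutually adjacent) vertices, which bounds how many paths this requires — or, when $G^{*}$ is essentially a disjoint union of two cliques, argue separately, for example replacing one or two clique-side $3$-paths by $2$-vertex paths $x$-$z$ whose endpoint $x$ is a ``hub'' adjacent to every independent-side vertex, exploiting the bound $\delta(G)\ge n-1-\omega(G)\ge 3\ell-2$ and the consequent existence of an edge with at least $2(3\ell-2)-(4\ell-1)=2\ell-3\ge\ell$ common neighbours (so its endpoints serve as two such hubs, joined to one another by a monochromatic edge). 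Finally one checks that the small values of $\ell$ for which this extremal sub-case is relevant are limited, since for $\ell\le 6$ one has $4\ell-1\ge R(3,\ell+1)$ and Ramsey's theorem forces $\omega(G)\ge\ell+1$, returning us to the $\omega(G)\ge\tfrac{n+3}{4}$ case. Making this endgame fully rigorous — quite possibly via a structural statement that a dense $\alpha\le 2$ graph either has enough disjoint induced $3$-paths or is very close to a union of two cliques — is where the bulk of the remaining work lies.
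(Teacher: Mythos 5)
Your reductions and your handling of all but the extremal case coincide with the paper: the even-$n$ reduction, the use of Lemma \ref{thm0} and Lemma \ref{thm2} to force $\kappa(G)\geq\lceil\frac{n}{2}\rceil$ and $\omega(G)<\frac{n+3}{4}$, and the application of Lemma \ref{lem1} when $n\geq 4\ell+1$ (with the count $2\ell+\lceil\frac{n}{2}\rceil\leq n$) are exactly Claims \ref{claim0} and \ref{claim1} of the paper, and your observation that $\alpha(G)=2$ makes any family of $\ell$ disjoint induced $3$-paths plus $\lceil\frac{n}{2}\rceil-\ell$ further vertices into a special odd model is the same verification the paper performs. However, in the case $n=4\ell-1$ your proposal has a genuine gap, which you yourself flag: after extracting $\ell-1$ disjoint induced $3$-paths $\mathcal{Q}$ by Lemma \ref{lem1}, you need a vertex of the $(\ell+2)$-vertex remainder $G^{*}$ with at least $\ell$ neighbours inside $G^{*}$, and as your $\overline{C_{\ell+2}}$ example shows this can fail for an arbitrary choice of $\mathcal{Q}$. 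None of your proposed repairs is carried out: the ``absorb the non-neighbours'' and ``hub edge'' ideas are only sketched (in particular the hub variant would change the branch sets on the clique side and needs its own verification of monochromatic connections), and the Ramsey remark $4\ell-1\geq R(3,\ell+1)$ only holds for $\ell\leq 6$, so it cannot dispose of the extremal case in general. Since $n=4\ell-1$ is precisely where the whole difficulty of the theorem sits, this is a missing core argument rather than a routine detail.

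The paper closes this case with an exchange argument you did not find: fix the vertex $v$ \emph{first}, take $\ell-1$ disjoint induced $3$-paths $\mathcal{P}_v$ in $G\backslash v$ (Claim \ref{claim2}, again via Lemma \ref{lem1}), and among all such families choose one maximizing $|N_G[v]-V(\mathcal{P}_v)|$. Writing $B$ for the $\ell+1$ leftover vertices and $X=B\cap M(v)$, Claim \ref{claim4} shows $|X|\leq 1$: if $|X|\geq 2$, then since $M(v)$ is a clique of size at most $\ell$ (here $\omega(G)\leq\ell$ is used), some path $P=a_1$-$a_2$-$a_3$ of $\mathcal{P}_v$ is completely adjacent to $v$, and using $\alpha(G)=2$ one can always re-route a vertex $x\in X$ into $P$ (replacing $P$ by $a_3$-$x$-$a_1$, $a_2$-$a_1$-$x$ or $a_3$-$a_2$-$x$), strictly increasing $|N_G[v]-V(\mathcal{P}_v)|$, a contradiction. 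This guarantees at least $\ell$ leftover neighbours of $v$, i.e.\ exactly the ``vertex of degree at least $\ell$ in the remainder'' your plan requires, after which the construction you describe (clique side $\mathcal{P}_v\cup\{v\}$, independent side $\ell$ neighbours of $v$, all singleton branch sets in one colour) finishes the proof. If you want to complete your write-up, this maximization-plus-swap step is the ingredient to add.
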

\begin{proof}
Assume that Theorem \ref{main-lemma} is not true. Let $G$ be a counterexample to Theorem \ref{main-lemma} with $|V(G)|$ as small as possible.

\begin{claim}\label{claim0}
$n\geq4\ell-1$ is odd.
\end{claim}
\begin{proof}
Assume that $n$ is even. Since Theorem \ref{main-lemma} holds for $G \backslash v$ for any $v\in V(G)$, Theorem \ref{main-lemma} holds for $G$ as $\lceil\frac{n}{2}\rceil=\lceil\frac{n-1}{2}\rceil$, a contradiction. Therefore, $n$ is odd, so $n\geq4\ell-1$ as $2\ell \leq \lceil\frac{n}{2}\rceil$. 
\end{proof}

Since all odd models of a clique are special, by Lemmas \ref{thm0} and \ref{thm2}, and Claim \ref{claim0}, we have $\kappa(G)\geq \lceil\frac{n}{2}\rceil$ and $\omega(G)< \frac{n+3}{4}$.

\begin{claim}\label{claim1}
$n=4\ell-1$.
\end{claim}
\begin{proof}
Suppose not. Since $n$ is odd by Claim \ref{claim0}, we have $n\geq4\ell+1$. Since $\kappa(G)\geq \lceil\frac{n}{2}\rceil \geq \lceil\frac{n-1}{4}\rceil $
and $\omega(G)< \frac{n+3}{4} \leq \lceil\frac{n}{2}\rceil$,
it follows from Lemma \ref{lem1} that 
$G$ contains $\ell$ pairwise disjoint induced subgraphs, each isomorphic to $P_3$, denoted the collection of such subgraphs  by $\mathcal{P}$. 
Moreover, since $n\geq 4\ell+1$ is odd by Claim \ref{claim0}, we have $$|V(G)-V(\mathcal{P})|=n-3\ell=\lceil\frac{n}{2}\rceil -\ell+\lfloor\frac{n}{2}\rfloor-2\ell\geq \lceil\frac{n}{2}\rceil -\ell. \eqno{(2.1)}$$

We claim that $G$ contains a special odd model of $K^{\ell}_{\ell,\lceil\frac{n}{2}\rceil- \ell}$. Let $B$ be a subset of $V(G)-V(\mathcal{P})$ with size $\lceil\frac{n}{2}\rceil-\ell$. By (2.1), such $B$ exists. Let \(\mathcal{P}\) be the set of branch sets of the part of size \(\ell\), and the vertices in \(B\) be the branch sets for the part of size $\lceil\frac{n}{2}\rceil-\ell$. Assign color 2 to all endpoints of each path $P_3$ in \(\mathcal{P}\) and all vertices in \(B\), and color 1 to the internal vertex of each path $P_3$ in \(\mathcal{P}\). Then \(\mathcal{P}\cup\{v:\ v\in B\}\) induces an odd model of $ K^{\ell}_{\ell,\lceil\frac{n}{2}\rceil- \ell}$ of $G$ with all branch sets of the part of size $\lceil\frac{n}{2}\rceil-\ell$ having the same color  as $\alpha(G)=2$,
which is a contradiction.
\end{proof}

For any vertex $v\in V(G)$, set $M(v):=V(G)-N[v]$. Since $\alpha(G)=2$, the set $M(v)$ induces a clique in $G$. Moreover, since $\omega(G)< \frac{n+3}{4}$ and $n=4\ell-1$ by Claim \ref{claim1}, we have $\omega(G)\leq \ell$. Hence, $|M(v)|\leq \ell$. 

\begin{claim}\label{claim2}
For any pair of vertices $\{u,v\}$ in $V(G)$, the graph $G \backslash \{u,v\}$ contains $\ell-1$ pairwise disjoint induced subgraphs, each isomorphic to $P_3$.
\end{claim}
\begin{proof}
Since $\kappa(G)\geq \lceil\frac{n}{2}\rceil$, we have $\kappa(G \backslash \{u,v\})\geq \lceil\frac{n}{2}\rceil-2= \lceil\frac{n-2}{2}\rceil-1 \geq \lceil\frac{|V(G \backslash \{u,v\})|-1}{4}\rceil $.
Since Theorem \ref{main-lemma} trivially holds when $n\leq 3$, we have $n\geq5$ as $n$ is odd by Claim \ref{claim0}. Then  $\omega(G \backslash \{u,v\})\leq\omega(G) < \frac{n+3}{4} \leq \lceil\frac{|V(G \backslash \{u,v\})|}{2}\rceil$. Hence,
by Lemma \ref{lem1}, $G \backslash \{u,v\}$ has $\ell-1$ pairwise disjoint induced subgraphs, each isomorphic to $P_3$.
\end{proof}

By Claim \ref{claim2}, for any vertex $v\in V(G)$, the graph $G \backslash v$ contains $\ell-1$ pairwise disjoint induced subgraphs, each isomorphic to $P_3$, denoted the collection of such subgraphs  by $\mathcal{P}_v$.
Set $B:=V(G)-(V(\mathcal{P}_v)\cup \{v\})$.
Then $|B|=\ell+1$ by Claim \ref{claim1}.
Without loss of generality we may assume that $\mathcal{P}_v$ is chosen with $|N_G[v]-V(\mathcal{P}_v)|$ as large as possible. Set $X: = B \cap M(v)$ and $Y:= B \cap N(v)$. That is, $(X,Y)$ is a partition of $B$.

\begin{claim}\label{claim4}
$|X|\leq 1$ and $|Y|\geq \ell$.
\end{claim}

\begin{proof}
Assume that $|X|\geq 2$. Since $|M(v)|\leq \ell$ and $X \subseteq M(v)$, we have $|M(v)\cap \mathcal{P}_v|\leq \ell-2$. Thus, there must exist an induced 3-vertex-path $P \in \mathcal{P}_v$ such that $v$ is complete to $V(P)$. Set $P:=a_1$-$a_2$-$a_3$. 
Choose $x\in X$. Since $\alpha(G)=2$, by symmetry we may assume that $a_1x\in E(G)$.
When $a_3x\in E(G)$, replacing $P$ with $a_3$-$x$-$a_1$ get $\ell-1$ pairwise disjoint induced 3-vertex-paths $\mathcal{P}$ in $G \backslash v$.
Since $v$ is complete to $V(P)$, we have $|N_G[v]-V(\mathcal{P})|>|N_G[v]-V(\mathcal{P}_v)|$, which is a contradiction to the choice of $\mathcal{P}_v$. So $a_3x\notin E(G)$. Similarly, when $a_2x\notin E(G)$,  we can replace $P$ with $a_2$-$a_1$-$x$ to get a contradiction to the choice of $\mathcal{P}_v$. So $a_2x\in E(G)$. Replacing $P$ with $a_3$-$a_2$-$x$ get a contradiction to the choice of $\mathcal{P}_v$. Thus $|X|\leq 1$. Since $|B|=\ell+1$ and $(X,Y)$ is a partition of $B$, we have $|Y|\geq \ell$.
\end{proof}

We claim that $G$ contains a special odd $K^{\ell}_{\ell,\lceil\frac{n}{2}\rceil- \ell}$-model, implying that Theorem \ref{main-lemma} holds. 
Arbitrary choose $Y' \subseteq Y$ with $|Y'|=\ell=\lceil\frac{n}{2}\rceil-\ell$. Since $|Y|\geq \ell$ by Claim \ref{claim4}, such $Y'$ exists.
Let \(\mathcal{P}_v\cup \{v\}\) be the set of branch sets  of the part of size \(\ell\), and all vertices in  $Y'$ be the branch sets  for the part of size $\lceil\frac{n}{2}\rceil-\ell$. Assign color 2 to all endpoints of each path $P_3$ in \(\mathcal{P}_v\) and all vertices in $Y' \cup \{v\}$, and color 1 to the internal vertex of each path $P_3$ in \(\mathcal{P}_v\).
Then $V(\mathcal{P}_v)\cup Y' \cup \{v\}$ induces an odd model of $ K^{\ell}_{\ell,\lceil\frac{n}{2}\rceil- \ell}$ with all branch sets that are isolated vertices having the same color as $\alpha(G)=2$. 
\end{proof}

A graph $G$ is \emph{$k$-vertex-critical} if $\chi(G) = k$
and $\chi(G-v) < k$ for each $v\in V(G)$. If the complement of $G$ is connected, we say $G$ is {\em anti-connected}.

\begin{theorem}{\rm(\cite{MS})}\label{thmMS}
Let $G$ be a $k$-vertex-critical graph such that $G$ is anti-connected. Then for any $x\in V(G)$, the graph $G-x$ has a $(k-1)$-coloring in which every color class contains at least {\rm2} vertices.
\end{theorem}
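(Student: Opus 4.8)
The plan is to pass to the complement $H=\overline{G}$, where anti-connectedness becomes connectivity of $H$ and a proper $(k-1)$-colouring of $G-x$ becomes a partition of $V(G)-x$ into $k-1$ cliques of $H$; the goal is to produce such a partition with no singleton clique. Since $G$ is $k$-vertex-critical, $\chi(G-x)=k-1$, so partitions of $V(G)-x$ into $k-1$ cliques of $H$ exist, and none can be coarsened to fewer than $k-1$ cliques, since that would $(k-1)$-colour $G$. I would argue by contradiction: assume every $(k-1)$-colouring of $G-x$ has a singleton class, and among all $(k-1)$-colourings fix one, $\phi$, with the fewest singleton classes, say $s\ge 1$.

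First I would extract local structure around a singleton class $\{v\}$ of $\phi$. Because $\chi(G)=k$, the vertex $x$ has a neighbour in every colour class, so in particular $xv\in E(G)$; and because dropping a colour is impossible, $v$ has a neighbour in every other class, for otherwise recolouring $v$ would empty its class and give a $(k-2)$-colouring. Anti-connectedness forbids a vertex that is universal in $G$, so $v$ has a non-neighbour $u\ne x$. Moving $u$ into $\{v\}$ keeps the colouring proper, as $u\not\sim v$; if the class of $u$ had size $\ge 3$ this would strictly decrease the number of singletons, contradicting minimality, and if $u$ were itself a singleton then $\{u,v\}$ would merge into one clique and $(k-2)$-colour $G-x$, again impossible. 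Hence every non-neighbour of $v$ lies in a class of size exactly two, say $\{u,u'\}$, and performing the swap yields another minimum-singleton colouring in which the singleton has \emph{slid} from $v$ to $u'$ along the $H$-path $v\,u\,u'$ (note $vu,uu'\in E(H)$). Iterating, a singleton can be slid along $H$.

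The decisive step is to convert this sliding into a separation of $H$. I would let $T$ be the set of all vertices that appear as a singleton in \emph{some} minimum-singleton colouring, and use the swap analysis to show that every $H$-neighbour of a vertex of $T$ is a \emph{connector} lying in a size-two class whose partner again belongs to $T$; crucially, $x$ is adjacent in $G$ to every singleton, so $x\notin N_H(T)$. Tracking which $H$-edges can leave the closure of $T$ under sliding, the aim is to exhibit a nonempty proper set of vertices of $H$ with no $H$-edge leaving it, contradicting the connectivity of $H=\overline{G}$. The only other possible outcome of the process is that two singletons become simultaneously $H$-adjacent, which again lets us merge them and $(k-2)$-colour $G-x$, a contradiction. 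In either case $s=0$, and deleting $x$ from the resulting clique partition of $V(G)-x$ gives the required $(k-1)$-colouring of $G-x$ with every class of size at least two.

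The main obstacle is exactly this closure/separation step: proving that the reachable singleton positions together with their connectors form an $H$-closed set. A single swap only slides a singleton between size-two classes, so I expect to need an auxiliary extremal parameter (for instance, minimising the number of size-two classes, or maximising $\sum_{C}\binom{|C|}{2}$ over the classes, among minimum-singleton colourings) to prevent connector vertices from having uncontrolled $H$-neighbours, and to invoke vertex-criticality at vertices other than $x$ to constrain those neighbourhoods. I would also dispose of the degenerate cases ($k\le 2$, where anti-connectedness makes the statement vacuous) and note that the argument forces $n\ge 2k-1$ as a by-product, which is exactly what is needed for $k-1$ classes of size at least two to fit inside $V(G)-x$.
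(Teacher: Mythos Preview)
The paper does not prove this statement; it is quoted without proof as a result of Stehl\'{\i}k \cite{MS} and used as a black box. There is therefore no proof in the paper to compare your proposal against.

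As for the proposal itself, you are heading in the right direction in spirit (complement, minimum-singleton colourings, swap/sliding arguments), but by your own admission the crucial ``closure/separation'' step is not carried out: you have not shown that the set of reachable singleton positions and their connectors is $H$-closed, and the hoped-for auxiliary extremal parameter is only gestured at. So as written this is a plausible plan, not a proof. Since the theorem is an external citation here, the appropriate fix is simply to cite \cite{MS} rather than attempt an independent proof.
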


Theorem \ref{obiminor2} follows immediately from Theorem \ref{main-lemma2}.
\begin{theorem} \label{main-lemma2}
Let $G$ be an $n$-vertex graph with $\alpha(G)=2$. For any positive integer $\ell$ with $\ell < \chi(G)$, the graph $G$ contains a special odd model of $K_{\ell,\chi(G)- \ell}$. 
\end{theorem}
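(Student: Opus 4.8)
The plan is to reduce the general statement to Theorem \ref{main-lemma} by passing to a suitable vertex-critical induced subgraph and then inflating its special odd $K^{\ell}_{\ell,\lceil n/2\rceil-\ell}$-model into a special odd $K_{\ell,\chi(G)-\ell}$-model of $G$. First I would set $k:=\chi(G)$ and take an induced subgraph $H\subseteq G$ that is $k$-vertex-critical; note $\alpha(H)\le\alpha(G)=2$, and since $\alpha(H)\le 2$ forces $\overline H$ to have no triangle, $H$ has no $K_2$-free complement obstruction, so one checks $H$ is anti-connected (if $\overline H$ were disconnected, $H$ would be a join of two graphs each with independence number $1$, i.e. a complete graph, contradicting $\ell<k=\chi(H)=|V(H)|$ unless the pieces are trivial — this needs a short argument, possibly handling the clique case $H=K_k$ separately, where the statement is immediate since $K_k\succeq_{om}K_{\ell,k-\ell}$). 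Let $m:=|V(H)|$.

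Next I would want to say $\chi(H)=k$ is close to $\lceil m/2\rceil$. Because $\alpha(H)\le 2$, a proper colouring of $H$ has color classes of size $\le 2$, so $k=\chi(H)\ge\lceil m/2\rceil$; conversely $H$ being $k$-vertex-critical is exactly the hypothesis needed for Theorem \ref{thmMS}: for any $x\in V(H)$, $H-x$ has a $(k-1)$-colouring all of whose classes have size $\ge 2$, which forces $m-1\ge 2(k-1)$, i.e. $k\le\lceil m/2\rceil$ (using parity of $m-1$ here). Hence $k=\lceil m/2\rceil$. Now apply Theorem \ref{main-lemma} to $H$: for $2\ell\le\lceil m/2\rceil=k$, $H$ contains a special odd model of $K^{\ell}_{\ell,k-\ell}$, which in particular gives a special odd model of $K_{\ell,k-\ell}$. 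This settles the range $2\ell\le k$.

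The remaining range is $\lceil k/2\rceil\le\ell<k$, equivalently $1\le k-\ell\le\lfloor k/2\rfloor\le\ell$; by the symmetry $K_{\ell,k-\ell}=K_{k-\ell,\ell}$ and setting $\ell':=k-\ell$ we have $2\ell'\le k$, so Theorem \ref{main-lemma} applied to $H$ yields a special odd model of $K^{\ell'}_{\ell',k-\ell'}=K^{k-\ell}_{k-\ell,\ell}$, and dropping the clique edges gives the desired special odd $K_{\ell,k-\ell}$-model. Thus in all cases $\ell<k$ we obtain a special odd $K_{\ell,\chi(G)-\ell}$-model inside $H$, hence inside $G$ (an odd model of $H'\subseteq G$ is an odd model of $G$, and ``special'' is preserved since the branch sets and their 2-colouring are unchanged).

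The main obstacle I expect is the bookkeeping around Theorem \ref{thmMS} and the anti-connectedness hypothesis: one must verify that the critical subgraph $H$ genuinely satisfies the anti-connectedness requirement (handling the complete-graph degenerate case by hand), and one must be careful with parities when deducing $k=\lceil m/2\rceil$ from $m-1\ge 2(k-1)$ together with $k\ge\lceil m/2\rceil$ — in particular confirming that $m$ is forced to be odd exactly when it needs to be, so that Theorem \ref{main-lemma}'s hypothesis $2\ell\le\lceil m/2\rceil$ lines up with $\ell<k$ in both the $2\ell\le k$ and the complementary range. Everything after that is a direct quotation of Theorem \ref{main-lemma} plus the trivial observation that a special odd $K^{t}_{t,s}$-model contains a special odd $K_{t,s}$-model.
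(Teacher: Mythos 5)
Your reduction has a genuine gap at the anti-connectedness step, and it is not a minor one: a $k$-vertex-critical graph $H$ with $\alpha(H)\le 2$ need not be anti-connected, and when $\overline{H}$ is disconnected the two join parts only satisfy $\alpha\le 2$ (not $\alpha=1$ as you assert), so they need not be complete. For example, take $H$ to be the join of two $5$-cycles: it is $6$-vertex-critical, has $\alpha(H)=2$, is neither complete nor anti-connected, and has $|V(H)|=10$, so $\chi(H)=6>\left\lceil |V(H)|/2\right\rceil=5$. In this situation Theorem \ref{thmMS} is not applicable, your identity $k=\lceil m/2\rceil$ is simply false, and Theorem \ref{main-lemma} applied to $H$ only yields a special odd $K^{\ell}_{\ell,5-\ell}$-model, which is too small for $K_{\ell,6-\ell}$. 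So the join case is not a ``short argument'' you can defer; it is exactly the case your structural claim incorrectly rules out, and it is the crux of the proof of Theorem \ref{main-lemma2}.

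The paper handles this case by induction on a minimum counterexample $G$: after reducing to $G$ vertex-critical, if $G$ is not anti-connected it is a join $G_1+G_2$ with $\chi(G)=\chi(G_1)+\chi(G_2)$; one splits $\ell=\ell_1+\ell_2$ with $1\le\ell_i<\chi(G_i)$ (possible because $\ell\ge 2$, the cases $\ell\in\{1,\chi(G)-1\}$ being the star $K_{1,\chi(G)-1}$ obtained from $\Delta(G)\ge\chi(G)-1$) and glues special odd models of $K_{\ell_i,\chi(G_i)-\ell_i}$ supplied by minimality. The \emph{special} property is precisely what makes the gluing possible: after swapping the two colors in one part if necessary, all single-vertex branch sets in both parts carry the same color, so the join edges furnish the required monochromatic connections between branch sets lying in different parts. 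Only once this join case is eliminated does the argument invoke Theorem \ref{thmMS} and then Theorem \ref{main-lemma}, and there your remaining steps are sound (the clique case, the parity computation giving $\chi=\lceil m/2\rceil$ in the anti-connected case, and the symmetry $K_{\ell,\chi-\ell}=K_{\chi-\ell,\ell}$ to cover $2\ell>\chi(G)$, a point the paper itself leaves implicit). To repair your proposal you would have to add the join induction, and note that this is also why the theorem is stated for \emph{special} odd models rather than odd models alone.
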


\begin{proof}
Assume not. Let $G$ be a counterexample to Theorem \ref{main-lemma2} with $|V(G)|$ as small as possible. Then there exists an integer $\ell$ with $\ell<\chi(G)$ such that $G$ contains no special odd model of $ K_{\ell,\chi(G)- \ell}$. Since $\Delta(G)\geq \chi(G)-1$  implies that $G$ contains a special odd model of  $K_{1,\chi(G)-1}$, we have $\ell\geq2$, where $\Delta(G)$ denotes the maximum degree of $G$.

We claim that $G$ is $\chi(G)$-vertex-critical. Assume not. Then $\chi(G-x)=\chi(G)$ for some vertex $x\in V(G)$. Moreover, by the minimality of $G$, for any positive integer $\ell<\chi(G-x)$, the graph  $G-x$ contains a special odd model of $ K_{\ell,\chi(G)- \ell}$ as $\chi(G-x)=\chi(G)$, which is a contradiction. Hence, $G$ is $\chi(G)$-vertex-critical.

We claim that $G$ is anti-connected. Suppose not. Then there is a partition $(V_1,V_2)$ of $V(G)$ such that $V_1$ is complete to $V_2$. Set $G_1:=G[V_1]$ and $G_2:=G[V_2]$. Then $\chi(G)=\chi(G_1)+\chi(G_2)$.
By the minimality choice of $G$, for any positive integers $1\leq i\leq2$ and $\ell_i<\chi(G_i)$, the graph $G_i$ contains an odd model of $K_{\ell_i,\chi(G_i)- \ell_i}$ such that all branch sets that are isolated vertices are colored by the same color, say color 1.  Without loss of generality we may further assume that $\ell_1$, $\ell_2$ are chosen with $\ell=\ell_1+\ell_2$. Since  $\ell\geq2$, such choice exists.
Hence, $G$ contains a special odd model of $K_{\ell_1+\ell_2,\chi(G_1)+\chi(G_2)- (\ell_1+\ell_2)}=K_{\ell,\chi(G)- \ell}$, contradicting that $G$ is a counterexample to Theorem \ref{main-lemma2}. Hence,  $G$ is anti-connected.

By Theorem \ref{thmMS} and the two claims proved in the last two paragraphs, we have that $|V(G)|\geq2\chi(G)-1$. Thus, $2\chi(G)-1 \leq|V(G)|\leq2\chi(G)$ as $\alpha(G)=2$.
When $|V(G)|=2\chi(G)$, for any vertex $x\in V(G)$, 
we have that $$2\chi(G)-1=|V(G-x)|\leq 2\chi(G-x)=2(\chi(G)-1),$$ which is a contradiction.
When $|V(G)|=2\chi(G)-1$, by Theorem \ref{main-lemma}, $G$ contains a special odd model of $K_{\ell,\lceil\frac{|V(G)|}{2}\rceil- \ell}=K_{\ell,\chi(G)- \ell}$, which is a contradiction.
\end{proof}


	
	
	\vspace{5pt}

\end{document}